\theoremstyle{plain} 
\newtheorem{thm}{Theorem}[section]
\newtheorem{cor}[thm]{Corollary}
\newtheorem{lem}[thm]{Lemma}
\newtheorem{prop}[thm]{Proposition}
\theoremstyle{definition}
\newtheorem{defn}[thm]{Definition}
\theoremstyle{remark}
\newtheorem{rem}[thm]{Remark}
\DeclarePairedDelimiter{\card}{\lvert}{\rvert} 
\newcommand{\mcf}{\mathcal{F}}
\newcommand{\mcg}{\mathcal{G}}
\newcommand{\mci}{\mathcal{I}}
\newcommand{\mcm}{\mathcal{M}}
\newcommand{\mcn}{\mathcal{N}}
\newcommand{\mcx}{\mathcal{X}}
\definecolor{darkblue}{rgb}{0.0,0.0,0.3}
\definecolor{darkmagneta}{rgb}{0.5,0.0,0.5}
\definecolor{darkred}{rgb}{0.5,0.0,0.0}
\newcommand\doi[1]{doi:\doilink{#1}}
\renewcommand*\url[1]{\href{#1}{\texttt{#1}}}
\newcommand{\isp}{\overline{P}} 
\newcommand{\esp}{\overline{Q}} 
\newcommand{\cpl}{\overline{\Omega}} 
\title{A construction of the abstract induced subgraph poset of a graph from its abstract edge subgraph poset}
\author{Deisiane Lopes Gonçalves \thanks{Departamento de Matemática,
    Universidade Federal de Minas Gerais (UFMG), Brasil. %
    \textit{E-mail:} \texttt{deisianel@ufmg.br}} %
  \and %
  Bhalchandra D. Thatte \thanks{Departamento de Matemática, Universidade Federal
    de Minas Gerais (UFMG), Brasil. %
    \textit{E-mail:} \texttt{thatte@ufmg.br}}%
}
\begin{document}
\maketitle

\begin{abstract}
  The abstract induced subgraph poset of a graph is the isomorphism class of the
  induced subgraph poset of the graph, suitably weighted by subgraph counting
  numbers. The abstract bond lattice and the abstract edge-subgraph poset are
  defined similarly by considering the lattice of subgraphs induced by connected
  partitions and the poset of edge-subgraphs, respectively. Continuing our
  development of graph reconstruction theory on these structures, we show that
  if a graph has no isolated vertices, then its abstract bond lattice and the
  abstract induced subgraph poset can be constructed from the abstract
  edge-subgraph poset except for the families of graphs that we
  characterise. The construction of the abstract induced subgraph poset from the
  abstract edge-subgraph poset generalises a well known result in reconstruction
  theory that states that the vertex deck of a graph with at least 4 edges and
  without isolated vertices can be constructed from its edge deck.
  
  Mathematics Subject Classification MSC2020: 05C60
\end{abstract}

\section{Introduction}
\label{sec-intro}

One of the beautiful conjectures in graph theory, which has been open for more than 70 years, is about vertex reconstruction of graphs \cite{bondy2014}. It was proposed by Ulam and Kelly \cite{kelly1942}, and was reformulated by Harary \cite{hara1964} in the more intuitive language of reconstruction. This conjecture asserts that every finite simple undirected graph on three or more vertices is determined, up to isomorphism, by its collection of vertex-deleted subgraphs (called the deck). Harary \cite{hara1964} also proposed an analogous conjecture, known as the edge reconstruction conjecture. It asserts that every finite simple undirected graph with four or more edges is determined, up to isomorphism, by its collection of edge-deleted subgraphs (called the edge-deck). 

In a series of papers \cite{thatte-isp,thatte-esp,thatte-lattice}, the second
author considered three objects related to the vertex and the edge decks of a
graph $G$, namely, the abstract induced subgraph poset $\isp(G)$, the abstract bond lattice $\cpl(G)$, and the abstract edge-subgraph poset
$\esp(G)$. The induced subgraph poset $P(G)$ is the poset of distinct (mutually
non-isomorphic) induced non-empty subgraphs of $G$, where each edge of the
poset, say from $G_i$ to $G_j$, is weighted by the number of induced subgraphs
of $G_j$ that are isomorphic to $G_i$. By {\em abstract} we mean the isomorphism
class of $P(G)$, and denote it by $\isp(G)$. Analogously, the abstract bond lattice $\cpl(G)$ is defined for the distinct spanning subgraphs
induced by connected partitions of $G$, and the abstract edge-subgraph poset
$\esp(G)$ is defined for the distinct edge-subgraphs of $G$.

Many old and new results on the reconstruction conjectures were proved for the
problem of reconstructing a graph from its abstract induced subgraph poset or
from its abstract edge-subgraph poset. For example, it was shown in
\cite{thatte-isp} that all graphs on 3 or more vertices are vertex
reconstructible if and only if all graphs except the empty graphs are
$P$-reconstructible (reconstructible from the abstract induced subgraph
poset). On the other hand, even for classes of graphs known to be vertex
reconstructible (e.g., disconnected graphs), the problem of $P$-reconstruction
may be prohibitively difficult. Moreover, it was shown in \cite{thatte-lattice}
that $\cpl(G)$ and $\isp(G)$ can be constructed from each other except in a few
cases that were characterised. It was also shown in \cite{thatte-esp} that the
edge reconstruction conjecture is true if and only if all graphs (except graphs
from a particular family) are $Q$-reconstructible (reconstructible from the
abstract edge-subgraph poset). Here we continue to develop reconstruction theory
for the three posets.

In Section~\ref{sec-notation}, we define the notation and terminology for graphs
and posets, give formal definitions of the three posets $\isp(G), \esp(G)$ and
$\cpl(G)$, and state the corresponding reconstruction problems.

In Section~\ref{sec-main}, we prove the main theorem of the paper, namely,
Theorem~\ref{thmmain}, which states that $\isp(G)$ can be constructed from
$\esp(G)$, except when $G$ belongs to a certain family $\mcn$ of graphs, and
that $\cpl(G)$ can be constructed from $\esp(G)$, except when $G$ belongs to a
certain family $\mcm$ of graphs. The two families $\mcm$ and $\mcn$ differ
slightly, and we characterise them completely. The construction of $\isp(G)$
from $\esp(G)$ generalises a well known result in reconstruction theory that states that the vertex deck of a graph with at least 4 edges and without isolated vertices can be constructed from its edge deck \cite{hemminger.1969}.

\section{Notation and terminology}
\label{sec-notation}

In this section, we summarise the notational conventions and definitions from \cite{thatte-esp,thatte-lattice}, which we use with minor changes. 

\subsection{Graphs}

First we clarify our convention about unlabelled graphs. We identify each
isomorphism class of graphs (also called an {\em unlabelled graph}) with a
unique representative of the class. If $G$ is a graph, then $\overline{G}$ is
the unique representative of the isomorphism class of $G$. Let $\mcg$ be the set
consisting of one representative element from each isomorphism class of finite
simple graphs. We assume that all newly declared graphs are from $\mcg$. On the
other hand, if a graph $H$ is derived from a graph $G$ (e.g., $H = G-e$), then
$H$ may not be in $\mcg$. If $H\in \mcg$ is isomorphic to a subgraph of $G$, we say that $G$ contains $H$ as a subgraph or $G$ contains $H$ or $H$ is
contained in $G$.

An empty graph is a graph with empty edge set. A null graph $\Phi$ is a graph
with no vertices.

Let $G$ be a graph. We denote the number of vertices of $G$ by $v(G)$, the
number of edges of $G$ by $e(G)$, and the number of components of $G$ by $k(G)$,
the maximum degree in $G$ by $\Delta (G)$, and the minimum degree in $G$ by
$\delta (G)$.

For $X\subseteq V(G)$, we denote the subgraph induced by $X$ by $G[X]$, the subgraph induced by $V(G)\setminus X$ by $G-X$, or $G-u$ when
$X=\{u\}$. For $E\subseteq E(G)$, we denote the subgraph induced by $E$ by
$G[E]$, it is called an edge-subgraph. The spanning subgraph of $G$
with edge set $E(G)\setminus E$ by $G-E$, or $G-e$ if $E=\{e\}$.

We denote the number of subgraphs (induced subgraphs, edge-subgraphs) of $G$
that are isomorphic to $H$ by $s(H,G)$ ($p(H,G), q(H,G)$, respectively).

A path on $n$ vertices is denoted by $P_n$ and a cycle on $n$ vertices is
denoted by $C_n$. We denote a complete graph on $n$ vertices by $K_n$, a
complete bipartite graph with $n$ and $m$ vertices in two partitions by
$K_{n,m}$, and the graph $K_4$ minus an edge by $K_4 \setminus e$.

We refer to the following graphs in some proofs.
\begin{figure}[h]
  \centering
  \includegraphics{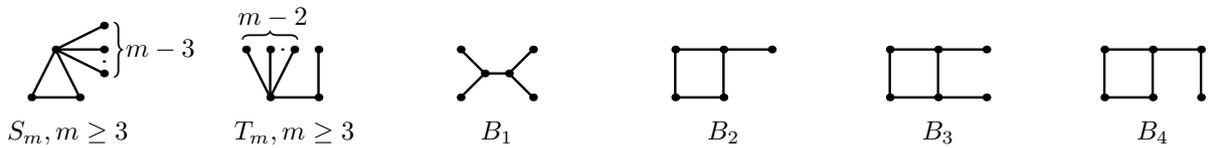}
  \caption{Some graphs referred to in proofs.}
  \label{fig:1}
\end{figure}

For terminology and notation about graphs not defined here, we refer to Bondy
and Murty \cite{bondy2008}.

\subsection{Partially ordered sets}

Let $(S,\leq)$ be a partially ordered set. If $x,y\in S$, $x\leq y$, $x\neq y$
and there is no $z\in S\setminus \{x,y\}$ such that $x\leq z\leq y$, then we say
$y$ covers $x$. We say that $\rho: S\to \mathbb{Z}$ is a rank function if for
all $x,y\in S$, $y$ covers $x$ implies $\rho(y)=\rho(x)+1$.

A weighted poset is a poset $(S, \leq)$ with a compatible weight function
$\omega: S\times S\to \mathbb{Z}$, where compatible means $\omega(x,y)=0$ unless
$x\leq y$. We say that weighted posets $(S,\leq,\omega)$ and
$(S',\leq',\omega')$ are isomorphic if there is a bijection $f: S\to S'$ such
that for all $x,y\in S$, we have $x\leq y$ if and only if $f(x)\leq'f(y)$ and
$\omega(x,y)=\omega'(f(x),f(y))$.

For terminology and notation about partially ordered sets, we refer to Stanley
\cite{stanley2011}.

\subsection{Graph posets}

\begin{defn}[Edge-subgraph poset] Let $Q \coloneqq (\mcg, \leq_e, q)$ be a
  weighted poset, where for all $H,G\in \mcg$, we have $H \leq_e G$ if $H$ is
  isomorphic to an edge subgraph of $G$, and $q:\mcg\times \mcg\to \mathbb{Z}$
  is a weight function such that for all $H,G\in \mcg$, $q(H,G)$ is the number
  of edge-subgraphs of $G$ that are isomorphic to $H$. For $G \in \mcg$, let
  $Q(G) \coloneqq \{H \in \mcg \mid H \leq_e G \text{ and } e(H) > 0\}$.  The
  \textbf{concrete edge-subgraph poset} of $G$ is the restriction of $Q$ to
  $Q(G)$; it is denote by just $Q(G)$. The \textbf{abstract edge-subgraph poset}
  of $G$ is the isomorphism class of $Q(G)$, which we identify with a weighted
  poset $\overline{Q}(G)\coloneqq (\{g_1,\ldots,g_m\},\leq_e,q)$, which is a
  representative concrete edge-subgraph poset in the isomorphism class of
  $Q(G)$. We assume that $g_1$ and $g_m$ are the minimal and the maximal
  elements in $\overline{Q}(G)$, respectively. We define a rank function
  $e\colon \overline{Q}(G) \to \mathbb{Z}$ such that $e(g_1)=1$ (so that
  $e(g_i)$ is the number of edges of $g_i$).
\end{defn}

\begin{defn}
  A graph $G$ is \textbf{$Q$-reconstructible} if it is determined up to
  isomorphism by its abstract edge-subgraph poset. An invariant of $G$ is said
  to be $Q$-reconstructible if it is determined by $\overline{Q}(G)$. A class of
  graphs is $Q$-reconstructible if each element of the class is
  $Q$-reconstructible.
\end{defn}

\begin{rem}
  For all graph $G$, we have $\overline{Q}(G)=\overline{Q}(G+K_1)$. Therefore,
  we understand $Q$-reconstructibility to mean $Q$-reconstructibility modulo
  isolated vertices.
\end{rem}

\begin{defn}[Induced subgraph poset] Let $P\coloneqq (\mcg, \leq_v, p)$ be a
  weighted poset, where for all $H,G\in \mcg$, we have $H \leq_v G$ if $H$ is
  isomorphic to an induced subgraph of $G$, and
  $p:\mcg\times \mcg\to \mathbb{Z}$ is a weight function such that for all
  $H,G\in \mcg$, $p(H,G)$ is the number of induced subgraphs of $G$ that are
  isomorphic to $H$.  For $G \in \mcg$, let
  $P(G) \coloneqq \{H \in \mcg \mid H \leq_v G \text{ and } e(H) > 0\} \cup
  \{K_1\}$. The \textbf{concrete induced subgraph poset} of $G$ is the
  restriction of $P$ to $P(G)$; it is denote by just $P(G)$. The
  \textbf{abstract induced subgraph poset} of $G$ is the isomorphism class of
  $P(G)$, which we identify with a weighted poset
  $\overline{P}(G)\coloneqq (\{g_1,\ldots,g_m\},\leq_v,p)$, which is a
  representative concrete induced subgraph poset in the isomorphism class of
  $P(G)$.
\end{defn}

\begin{defn}
  A graph $G$ is \textbf{$P$-reconstructible} if it is determined up to
  isomorphism by its abstract induced subgraph poset. An invariant of $G$ is
  said to be $P$-reconstructible if it is determined by $\overline{P}(G)$. A
  class of graphs is $P$-reconstructible if each element of the class is
  $P$-reconstructible.
\end{defn}

\begin{defn}
  Let $G$ be a graph. We say that a partition $\pi\coloneqq \{X_1,\ldots,X_n\}$
  of $V(G)$ is a \textbf{connected partition} of $V(G)$, if the induced
  subgraphs $G[X_1],\ldots, G[X_n]$ are connected; we write $\pi\vdash_c V(G)$.
\end{defn}

\begin{defn} \label{def-cpl} For $H_i, H_j\in\mcg$, if there exists
  $U\subseteq V(H_j)$ and $\pi\vdash_c U$ such that $H_j[\pi]\cong H_i$, we
  denote $H_i\leq_{\pi} H_j$.  Let $\Omega\coloneqq (\mcg, \leq_{\pi}, \omega)$
  be a weighted poset, where the weight function
  $\omega:\mcg\times \mcg\to \mathbb{Z}$ is such that for all $H,G \in \mcg$, we
  have
  $\omega(H,G)\coloneqq \card{\{(\pi, U)\mid U\subseteq V(G),\pi\vdash_c U\text{
      and } G[\pi]\cong H\}}$. For $G \in \mcg$, let
  $\Omega(G)\coloneqq \{H \in \mcg \mid H \leq_{\pi} G, v(H) = v(G)\}$. The
  \textbf{concrete bond lattice} of $G$ is the restriction of $\Omega$ to
  $\Omega(G)$; it is denote by just $\Omega(G)$. The \textbf{abstract bond
    lattice} of $G$ is the isomorphism class of $\Omega(G)$, which we identify
  with a weighted poset
  $\overline{\Omega}(G)\coloneqq (\{h_1,\ldots,h_m\},\leq_{\pi},\omega)$, which
  is a representative concrete bond lattice in the isomorphism class of
  $\Omega(G)$. We assume that $h_1$ and $h_m$ are the minimal and the maximal
  elements in $\overline{\Omega}(G)$, respectively. We define a rank function
  $\gamma$ on $\overline{\Omega}(G)$ such that $\gamma(h_1)=0$; hence
  $k(h_i)=v(G)-\gamma(h_i)$.
\end{defn}

Let $G$ be a graph. We consider elements of $\esp(G)$ as unknown graphs in
$\mcg$. A weight preserving isomorphism $\ell \colon \esp(G) \to Q(H)$ is called
a legitimate labelling of $\esp(G)$, and $H$ is called a $Q$-reconstruction of
$G$. (Note that $Q(H)$ is a concrete poset.) A graph invariant $f$ of $G$ is
$Q$-reconstructible if $f(H) = f(G)$ for all $Q$-reconstructions $H$ of $G$.
Let $x\in \esp(G)$. We say that $x$ is reconstructible or uniquely labelled (or
$x$ satisfies property $P$, or a graph invariant $f$ of $x$ is reconstructible)
if there is a graph $F$ such that for all legitimate labelling maps $\ell$ from
$\esp(G)$, we have $\ell(x) = F$ (or $\ell(x)$ satisfies property $P$, or
$f(\ell(x))$ is identical, respectively). Similarly, a graph $F$ is a
distinguished subgraph of $G$ (or simply $F$ is distinguished) if there is an
element $x$ such that for all legitimate labelling maps $\ell$ from $\esp(G)$,
we have $\ell(x) = F$. Analogous terminology may be used for $\isp(G)$ and
$\cpl(G)$.

\section{Constructing $\isp(G)$ and $\cpl(G)$ from $\esp(G)$}
\label{sec-main}
Let
\begin{align*}
  \mcf_0 & \coloneqq \{3K_2, K_3, K_{1,3}\}, \\
  \mcf_1& \coloneqq \{P_4, K_{1,2}+K_2, P_4+K_2, T_4 \}, \\
  \mcf_2& \coloneqq \{C_4, 2K_{1,2}, C_4+K_2, B_1, P_6, B_2, B_3,B_4\}, \\
  \mcf_3 &\coloneqq \{K_{1,m}\mid m>1 \text{ and } m\neq3\}\bigcup\{mK_2\mid m>1 \text{ and } m\neq3\}, \\
  \mcf_4 &\coloneqq \{rK_3+sK_{1,3}+F\mid r\neq s \text{ and } F\in\mathbb{N}^{\mcx}\}\setminus \{K_3,K_{1,3}\}, \text{ where}\\
  \mcx &\coloneqq\{P_n\mid n\geq 2\}\bigcup \{C_n\mid n\geq 4\}\bigcup \{S_4, K_4\setminus e, K_4\},
\end{align*} and $\mathbb{N}^{\mcx}$ is the set of all unlabelled finite graphs
(including the null graph) with components from $\mcx$.

The following result is the main theorem of this paper.

\begin{thm}[Main Theorem]\label{thmmain} Let $G$ be a graph with no isolated
vertices. Then
  \begin{enumerate}
  \item $\cpl(G)$ can be constructed from $\esp(G)$ if and only if $G$ does not
belong to $\mcm \coloneqq \mcf_0 \cup \mcf_2 \cup \mcf_4$.
  \item $\overline{P}(G)$ can be constructed from $\esp(G)$ if and only if $G$
does not belong to $\displaystyle \mcn \coloneqq \bigcup_{i=0}^4 \mcf_i$.
  \end{enumerate}
\end{thm}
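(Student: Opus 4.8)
The plan is to reconstruct $\isp(G)$ and $\cpl(G)$ from $\esp(G)$ by recovering, in a controlled order, increasingly fine-grained information about $G$ from its edge-subgraph poset. The fundamental distinction between the two target posets and the source poset is that $\esp(G)$ records edge-subgraphs (spanning subgraphs of an edge set, so with no isolated vertices beyond those forced), whereas $\isp(G)$ records induced subgraphs and $\cpl(G)$ records subgraphs coming from connected partitions. Since $G$ has no isolated vertices, the maximal element $g_m$ of $\esp(G)$ is $G$ itself, so abstractly $G$ is pinned down as soon as we can read off enough invariants to produce a legitimate labelling that is forced. My first step would be to establish the easy invariants that are manifestly $Q$-reconstructible from $\esp(G)$: the edge count $e(G)$ (the top rank), the rank function $e(\cdot)$, the full subposet structure below each element, and the weights $q(H,G)$. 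From these I would recover the number of edge-subgraphs with a prescribed number of edges, and in particular the structure of the atoms and coatoms of the poset.

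\emph{First I would handle $\cpl(G)$ (part 1).} The abstract bond lattice is built from connected partitions, so its natural invariants are $v(G)$ and $k(G)$ together with the count $\omega(H,G)$ of connected-partition-induced subgraphs. The key observation is that an edge-subgraph of $G$ on the full vertex set (a spanning edge-subgraph) whose components are exactly the blocks of a connected partition is precisely what the bond lattice records; so the plan is to extract from $\esp(G)$ the spanning edge-subgraphs of $G$, identify which of these arise from connected partitions, and count them with the correct multiplicity $\omega$. The technical content is to show that the covering relations and weights of $\cpl(G)$ are all determined by the portion of $\esp(G)$ consisting of spanning edge-subgraphs with a prescribed number of components, and that $v(G)$ itself is $Q$-reconstructible (this requires that $G$ have no isolated vertices and is where small graphs can fail). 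I would then isolate precisely the graphs for which $v(G)$ or the connected-partition counts cannot be disentangled from $\esp(G)$; these should be exactly the members of $\mcm = \mcf_0\cup\mcf_2\cup\mcf_4$. The families $\mcf_0$ and $\mcf_2$ are small exceptional graphs that must be checked by hand, whereas $\mcf_4$, being graphs of the form $rK_3 + sK_{1,3} + F$ with $r\neq s$, reflects the classical ambiguity that $K_3$ and $K_{1,3}$ have the same edge-subgraph poset data (three edges, same edge-subgraphs as abstract posets), so mixing them in unequal numbers produces distinct graphs with identical $\esp$.

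\emph{Then for $\isp(G)$ (part 2)} the strategy is to build $\isp(G)$ out of $\cpl(G)$ (or directly from $\esp(G)$) by using the result cited from \cite{thatte-lattice} that $\cpl(G)$ and $\isp(G)$ are mutually constructible except in a characterised handful of cases. Concretely, once $\cpl(G)$ is in hand for $G\notin\mcm$, the induced subgraph poset follows except for the additional small exceptional families; the extra family $\mcf_1$ (and the continued presence of $\mcf_0,\mcf_2,\mcf_3,\mcf_4$) accounts for the graphs where the induced-subgraph counts $p(H,G)$ cannot be recovered even though the bond-lattice data can — this is why $\mcn\supsetneq\mcm$ and $\mcn = \bigcup_{i=0}^4 \mcf_i$. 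For the forward direction of each biconditional I would exhibit, for every graph in the relevant exceptional family, a second non-isomorphic graph (or a pair of non-isomorphic outputs) with the same $\esp(G)$ but different $\cpl$ or $\isp$, thereby proving non-constructibility; the families $\mcf_1, \mcf_3$ are where the induced-subgraph-specific obstructions live, e.g. stars $K_{1,m}$ versus matchings $mK_2$ sharing edge-subgraph data.

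\emph{The hard part} will be the exact determination of the exceptional families and proving their minimality — that is, showing that \emph{every} graph outside $\mcm$ (resp. $\mcn$) really is constructible, not merely that a natural algorithm succeeds on generic graphs. This requires a tight argument that the $K_3$/$K_{1,3}$ interchange (captured by $\mcf_4$) and the star/matching and small-graph ambiguities (captured by $\mcf_1,\mcf_2,\mcf_3$) are the \emph{only} sources of ambiguity, which in turn demands a careful analysis of how the weights $q(H,G)$ determine component multiplicities. I expect the crux to be reconstructing the number of components of $G$ and the multiset of tree-components of low edge count, since these are exactly the data scrambled by the exceptional interchanges; bounding the ambiguity to the stated finite-plus-parametrised families, and verifying the small cases in $\mcf_0, \mcf_1, \mcf_2$ exhaustively, will be the most delicate and case-heavy portion of the proof.
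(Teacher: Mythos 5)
Your outline matches the paper's in broad strokes (counterexample pairs for the ``only if'' directions, recovery of vertex and component data to locate the bond lattice inside $\esp(G)$, and the cited result of \cite{thatte-lattice} to pass from $\cpl(G)$ to $\isp(G)$), but there is a genuine flaw in your plan for part 1. You propose to isolate ``the graphs for which $v(G)$ or the connected-partition counts cannot be disentangled from $\esp(G)$'' and claim these are exactly the members of $\mcm$. That equivalence is false, and your general method --- which begins by reconstructing $v(G)$ and then identifying the spanning edge-subgraphs --- cannot even be started on exactly the graphs that make part 1 delicate: for $G \in \mcf_1 \cup \mcf_3$ (e.g.\ $K_{1,m}$ versus $mK_2$, or $P_4$ versus $K_{1,2}+K_2$), the number of vertices is provably \emph{not} $Q$-reconstructible, since the partner graph has an isomorphic $\esp(\cdot)$ but a different number of vertices; yet the theorem asserts that $\cpl(G)$ \emph{is} constructible for these graphs. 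Constructibility of $\cpl(G)$ does not require recovering $v(G)$; it only requires that all graphs sharing the given abstract edge-subgraph poset have isomorphic abstract bond lattices. The paper handles this by a separate argument (Proposition~\ref{p-f1f3}): the pairs $\{K_{1,m}, mK_2\}$, $\{P_4, K_{1,2}+K_2\}$, $\{P_4+K_2, T_4\}$ have identical $\cpl(\cdot)$, and an induction using Lemmas~\ref{lem-k1m}, \ref{lem-mk2} and~\ref{lem-7edges} shows that no other graph shares their $\esp(\cdot)$. Executed literally, your criterion would place $\mcf_1\cup\mcf_3$ in the exceptional set of part 1, i.e.\ it would ``prove'' part 1 with $\mcm$ replaced by $\mcn$, which is a different (and false) statement.

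A second, related gap: what the positive direction actually needs is $v(x)$ and $k(x)$ for \emph{every} element $x$ of $\esp(G)$, so that the inversion expressing the bond-lattice weights $\omega$ as polynomials in the weights $q$ (the paper's Lemma~\ref{lem-omega}) can be carried out; invariants of the top element alone do not suffice. The hard cases are internal elements that are themselves ambiguous graphs ($K_3$ versus $K_{1,3}$ versus $3K_2$, $P_4$ versus $K_{1,2}+K_2$, and so on) occurring as proper subgraphs of an unambiguous $G$; this is where the pairs in $\mcf_2$ with equal vertex counts fail, and it is where the paper spends most of its effort (Lemmas~\ref{lem-vk}, \ref{lem-t4-k12+2k2}, \ref{lem-atleast4}, \ref{lem-K_1,2+2K_2-notT_4}, \ref{lem-T_4-not-K_1,2+2K_2}), distinguishing such subgraphs inside $\esp(G)$ by case analysis on $\Delta(G)$ and on whether $G$ contains $K_{1,2}+2K_2$ or $T_4$. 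Your proposal discusses recovering $v(G)$, $k(G)$ and the component multiplicities of $G$ itself, but never says how an ambiguous proper subgraph would be assigned its vertex and component counts, which is where the real work of the proof lies.
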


Throughout this section we assume that $G$ is a graph without isolated vertices,
and that we are given $\esp(G)$. The main idea in proving Theorem~\ref{thmmain}
is the following lemma. When it is applicable, it allows us to recognise which
elements of $\esp(G)$ must be in $\cpl(G)$ and to calculate the weight function
in the definition of $\cpl(G)$.

\begin{lem}\label{lem-omega} Let $g_i,g_k \in \overline{Q}(G)$. Then
$\omega(g_i,g_k)$ may be expressed as a polynomial in $q(g_r,g_s)$, where $g_i
\leq_e g_r <_e g_s \leq g_k $ and, for all $g_r$, we have $v(g_i) = v(g_r)$ and
$k(g_i) = k(g_r)$, and for all $g_s\neq g_k$, we have $v(g_i) = v(g_s)$ and
$k(g_i) = k(g_s)$.
\end{lem}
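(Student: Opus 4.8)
The plan is to give a concrete combinatorial reading of $\omega(g_i,g_k)$ and then to peel off two successive inclusion--exclusions, arranged so that each layer keeps the vertex count and the number of components pinned at $v(g_i)$ and $k(g_i)$; the only argument permitted to escape these constraints will be the fixed top graph $g_k$. First I would unwind Definition~\ref{def-cpl}: $\omega(g_i,g_k)$ counts pairs $(\pi,U)$ with $U\subseteq V(g_k)$, $\pi=\{X_1,\dots,X_n\}\vdash_c U$ and $g_k[\pi]\cong g_i$. Since $g_k[\pi]$ is the disjoint union of the connected induced subgraphs $g_k[X_1],\dots,g_k[X_n]$, this is exactly the number of ways to choose pairwise vertex-disjoint subsets $X_1,\dots,X_n$ of $V(g_k)$ whose induced subgraphs realise, as an unordered family, the connected components $C_1,\dots,C_n$ of $g_i$. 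Because $g_i\in\esp(G)$ has no isolated vertices, each $C_j$ carries an edge, so no block is a single vertex and the union spans no isolated vertex; in particular $v(g_i)$ and $k(g_i)=n$ are precisely the vertex and block counts of every such configuration.

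The first layer removes the word \emph{induced}. The requirement $g_k[X_j]\cong C_j$ (equality of edge sets inside the block) can be traded for the weaker $g_k[X_j]\supseteq C_j$ (the block merely contains a copy of $C_j$) at the cost of a M\"obius inversion over the lattice of edges added inside the blocks. Adding edges inside a connected block keeps it connected and never merges distinct blocks, so every graph produced in this step lives on $v(g_i)$ vertices with $k(g_i)$ components; these are exactly the graphs $g_r,g_s$ with $g_i\le_e g_r<_e g_s$, $v(g_r)=v(g_s)=v(g_i)$ and $k(g_r)=k(g_s)=k(g_i)$ demanded by the statement. Since the correction factorises over the components and the components of $g_i$ may repeat, multiplying the per-component expansions is what turns the expression into a genuine polynomial, rather than a mere linear form, in the weights $q(g_r,g_s)$.

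The second layer realises the relaxed \emph{disjoint family containing $C_1,\dots,C_n$} count inside $g_k$ as edge-subgraph numbers. A family of pairwise vertex-disjoint connected subgraphs of $g_k$ with no selected edge running between distinct blocks is the same datum as an edge-subgraph $g_k[E]$ whose isomorphism type $g_r$ has exactly $k(g_i)$ components on $v(g_i)$ vertices; summing over this type produces the terms $q(g_r,g_k)$, the only terms whose second argument is $g_k$ and hence the only place where the component count is allowed to drop below $k(g_i)$. I expect the main obstacle to be the bookkeeping of coincidences: two component-copies may share vertices, which would produce edge-subgraphs on fewer than $v(g_i)$ vertices and so violate the prescribed index constraints. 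The heart of the argument is therefore to show that all such overlap corrections reorganise into the intrinsic factors of the first layer (weights $q(g_r,g_s)$ with $g_s\neq g_k$, read off directly from $\esp(G)$) together with binomial constants in $v(g_k)$ recording how a $v(g_i)$-vertex configuration sits inside $g_k$; once no term of smaller vertex count survives, the resulting expression is a polynomial in the admissible weights $q(g_r,g_s)$ alone, as claimed.
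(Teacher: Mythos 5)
Your first layer (M\"obius inversion trading induced blocks for blocks containing spanning copies, within the class of graphs having the same vertex and component counts as $g_i$) is the right germ. But the second layer contains a genuine gap, and it is a self-inflicted one. The obstacle you flag --- ``two component-copies may share vertices'' --- exists only because you chose to factorise the count over the components of $g_i$ and multiply the per-component expansions. Having made that choice, the entire burden of the proof lands on showing that the overlap corrections reorganise into admissible weights $q(g_r,g_s)$ plus ``binomial constants in $v(g_k)$''; you explicitly call this ``the heart of the argument'' and then do not supply it. It is not a routine verification: overlapping copies produce counts of amalgamated configurations on fewer than $v(g_i)$ vertices, which are exactly the quantities the lemma forbids, and nothing in your sketch shows they cancel or re-express in the allowed terms. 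As written, the proposal is a plan with its central step missing.

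The overlap problem disappears entirely if you keep the count global instead of per-component. An edge-subgraph of $g_k$ isomorphic to $g_i$ \emph{is} a family of pairwise vertex-disjoint connected subgraphs realising the component multiset of $g_i$ --- the components of a graph cannot share vertices --- so the ``relaxed disjoint family count'' in your scheme is exactly $q(g_i,g_k)$, with no corrections needed. Classifying each such copy by the isomorphism type $g_j$ of the graph induced on the blocks of its component partition, and noting that whenever $v(g_j)=v(g_i)$ and $k(g_j)=k(g_i)$ every copy of $g_i$ in $g_j$ is spanning with component partition equal to that of $g_j$ (here one uses that $g_i$ has no isolated vertices), one obtains the single identity
\begin{equation*}
q(g_i,g_k)\;=\;\sum_{\substack{g_j\,:\; v(g_j)=v(g_i),\\ k(g_j)=k(g_i)}} q(g_i,g_j)\,\omega(g_j,g_k),
\end{equation*}
which is the paper's Equation~(\ref{eq-omega-1}). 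Since the nonzero terms have $g_i\leq_e g_j$ and $q(g_i,g_i)=1$, this system is triangular with unit diagonal and inverts by repeated substitution, which is precisely the paper's proof; the recursion terminates because $\omega(g_j,g_k)=q(g_j,g_k)$ once no admissible $g_r$ lies strictly between $g_j$ and $g_k$. Note also that the polynomial (rather than merely linear) character of the answer comes from the products $q(g_i,g_{j_1})q(g_{j_1},g_{j_2})\cdots$ generated by this recursion, not from multiplying per-component expansions as you propose.
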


\begin{proof} If $(v(g_i),k(g_i)) = (v(g_k),k(g_k))$, then
  \[ \omega(g_i,g_k) =
    \begin{cases} 1 & \text{ if } g_i = g_k, \\
      0 & \text{ otherwise}.
    \end{cases}
  \] Also, if $g_i \not \leq_e g_k$, then $\omega(g_i,g_k) = 0$. Hence in the
following calculation we assume that $(v(g_i),k(g_i)) \neq (v(g_k),k(g_k))$ and
$g_i <_e g_k$. We have
\begin{equation}\label{eq-omega-1}
  q(g_i,g_k)=\sum_{\substack{g_j \mid v(g_j) = v(g_i), \\
        k(g_j) = k(g_i)}}q(g_i,g_j)\omega(g_j,g_k).
  \end{equation} We rewrite Equation~(\ref{eq-omega-1}) as
  \begin{equation}\label{eq-omega-2}
    \omega(g_i,g_k)= q(g_i,g_k) - \sum_{\substack{g_j\mid g_i <_e g_j \\
        v(g_j) = v(g_i), \\
        k(g_j) = k(g_i)}} q(g_i,g_j)\omega(g_j,g_k),
  \end{equation} and repeatedly expand the factors $\omega(g_j,g_k)$ in each
term on the right hand side, with the condition that $\omega(g_j,g_k) =
q(g_j,g_k)$ if there is no $g_r$ such that $g_j <_e g_r <_e g_k$ and $v(g_r) =
v(g_j)$ and $k(g_r) = k(g_j)$. Thus we obtain the required polynomial.
\end{proof}

Lemma~\ref{lem-omega} can be used to calculate $\omega(g_i,g_k)$ only if we know
the number of vertices and the number of components of all elements that appear
in the computation. Hence most of the following lemmas are meant to either
$Q$-reconstruct $G$ or to show that the number of vertices and the number of
components of all elements that appear in the computation of $\omega(g_i,g_k)$
can be reconstructed.

We use several results from \cite{thatte-esp,thatte-lattice} in the proof.

\begin{thm}[Theorem 2.1 \cite{thatte-esp}] \label{thm-q-erc} The graphs in
  $\mcn$ are not $Q$-reconstructible. The edge reconstruction conjecture is true
  if and only if all graphs, except the graphs in $\mcn$, are
  $Q$-reconstructible.
\end{thm}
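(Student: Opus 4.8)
The plan is to prove the two assertions separately and to link $Q$-reconstruction to edge reconstruction through an edge-counting identity. The structural fact I would lean on throughout is that, by the Remark $\esp(G)=\esp(G+K_1)$, the abstract edge-subgraph poset is \emph{blind to isolated vertices}: it records $G$ only up to added isolated vertices, whereas the edge-deck $\{G-e\}$ keeps the full vertex set. The families in $\mcn$ are, in effect, exactly the graphs for which this lost vertex information creates a $Q$-ambiguity that the edge-deck can nonetheless resolve.

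For the first assertion (the graphs in $\mcn$ are not $Q$-reconstructible) I would exhibit, for each family, an explicitly non-isomorphic graph with the same abstract edge-subgraph poset. For $\mcf_3$ the point is that $\esp(K_{1,m})$ and $\esp(mK_2)$ are both \emph{chains}: the only edge-subgraph with $j$ edges is $K_{1,j}$ (resp.\ $jK_2$), and the weight between ranks $j<j'$ equals $\binom{j'}{j}$ in both, so $\esp(K_{1,m})\cong\esp(mK_2)$. For $\mcf_4$ I would use the involution $\sigma$ on $\mcg$ that replaces every $K_3$-component by a $K_{1,3}$-component and vice versa. A single $K_3$ and a single $K_{1,3}$ have isomorphic edge-subgraph posets under the map fixing $K_2$ and $K_{1,2}$ and exchanging the two tops; since the edge-subgraph poset of a disjoint union is built as a product over the components, $\sigma$ lifts to a weight-preserving isomorphism $\esp(rK_3+sK_{1,3}+F)\cong\esp(sK_3+rK_{1,3}+F)$, and as $K_3,K_{1,3}\notin\mcx$ the two graphs are non-isomorphic precisely when $r\neq s$. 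The finite families $\mcf_0,\mcf_1,\mcf_2$ I would dispatch by computing the (small) posets directly; for instance $3K_2$, $K_3$ and $K_{1,3}$ all share a single three-element weighted chain.

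For the equivalence I would prove a bridge between the edge-deck and $\esp(G)$ in two directions, both resting on the edge-analogue of Kelly's identity,
\begin{equation*}
  q(H,G)=\frac{1}{e(G)-e(H)}\sum_{e\in E(G)}q(H,G-e),\qquad e(H)<e(G).
\end{equation*}
In the direction \emph{edge-deck $\Rightarrow\esp(G)$} this is unconditional for $e(G)\geq 4$: each $G-e$ is known as a graph, so every $q(H,G-e)$ is available, the identity yields all weights $q(H,G)$, and the remaining lower weights $q(H,H')$ are intrinsic; thus the edge-deck assembles $\esp(G)$. Consequently any $Q$-reconstructible $G$ is edge-reconstructible. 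This already gives the implication \emph{(all graphs outside $\mcn$ are $Q$-reconstructible) $\Rightarrow$ ERC}, once I check that the graphs of $\mcn$ with at least four edges are edge-reconstructible by hand — for these the separating invariant is exactly the vertex count, which the edge-deck sees but $\esp$ does not (e.g.\ $v(K_{1,m})=m+1\neq 2m=v(mK_2)$, and $v(rK_3+sK_{1,3}+F)$ separates the swapped pair when $r\neq s$).

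The reverse direction \emph{$\esp(G)\Rightarrow$ edge-deck} is the heart of the matter and, I expect, the main obstacle. To read the edge-deck off $\esp(G)$ I must recover each $G-e$ as a genuine graph, which splits into (i) identifying the isomorphism type of each rank-$(e(G){-}1)$ element \emph{modulo isolated vertices}, and (ii) restoring the correct number of isolated vertices, i.e.\ reconstructing $v(G)$ from the isolated-vertex-blind poset. For (i) I would induct on $e(G)$: the elements below the top span the posets $\esp(G-e)$, so those $G-e$ lying outside $\mcn$ are identified by the inductive hypothesis; the delicate step is that some $G-e$ may itself lie in $\mcn$ and be $Q$-ambiguous, so I must show that when $G\notin\mcn$ the \emph{global} poset nevertheless pins down these deck-members. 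For (ii) the exceptional families intervene directly: I would build up a catalogue of $Q$-reconstructible invariants (the rank function gives $e(G)$ for free, then the number of components, the maximum degree, and finally $v(G)$) and prove each is reconstructible precisely for $G\notin\mcn$, the obstruction list being exactly $\mcf_0,\dots,\mcf_4$. Once $v(G)$ and the deck-members modulo isolated vertices are known, the edge-deck is reconstructed and the assumed ERC delivers $G$, giving \emph{ERC $\Rightarrow$ (all graphs outside $\mcn$ are $Q$-reconstructible)}. The bulk of the technical work — and the reason the families take their exact shape — lies in this vertex-count reconstruction and in controlling how $\mcn$-type ambiguities among the $G-e$ do or do not propagate to $G$.
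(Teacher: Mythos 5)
This theorem is not proved in the present paper---it is imported from \cite{thatte-esp}---so your proposal must be judged against that proof, whose overall shape (exhibit $Q$-mates for the graphs in $\mcn$; pass between the edge-deck and $\esp(G)$ via Kelly's identity) you have reproduced correctly. The first genuine gap is in your treatment of $\mcf_4$: your ``poset of a disjoint union is a product over components'' argument never invokes the hypothesis $F\in\mathbb{N}^{\mcx}$, and no argument ignoring that hypothesis can be correct, because the conclusion fails without it. Run verbatim, your argument would equally prove $\esp(K_3+T_4)\cong\esp(K_{1,3}+T_4)$; but this is false: $\esp(K_3+T_4)$ contains the element $K_3+K_{1,3}$ (the triangle is the $K_3$ component, the claw sits inside $T_4$), whereas no edge-subgraph of $K_{1,3}+T_4$ contains a triangle at all. (Consistently, $K_3+T_4\notin\mcn$ has $7$ edges, hence is $Q$-reconstructible by Lemma~\ref{lem-7edges}.) The swap $\sigma$ is weight-preserving only because of a component-level fact that characterises $\mcx$ and must be verified: in each of $P_n$, $C_n$, $S_4$, $K_4\setminus e$, $K_4$, triangles and claws occur in equal numbers, and never as a proper component of a larger edge-subgraph of that graph. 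Moreover, $q(H,G_1+G_2)=\sum q(A,G_1)\,q(B,G_2)$, summed over ordered pairs with $A+B\cong H$, is not determined by the two abstract posets alone, so ``lifting'' $\sigma$ has to be checked through these sums rather than by a formal product argument.

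The second, larger gap is that the implication (ERC $\Rightarrow$ every graph outside $\mcn$ is $Q$-reconstructible)---which is the substance of the theorem---remains in your proposal a plan rather than a proof. You defer exactly the two assertions that carry all the difficulty: (a) that for $G\notin\mcn$ the global poset pins down those deck entries $\overline{G-e}$ that themselves lie in $\mcn$, and (b) that $v(G)$ is $Q$-reconstructible for every $G\notin\mcn$, with ``obstruction list exactly $\mcf_0,\dots,\mcf_4$.'' Statement (b) as phrased assumes what is to be proved: the hard part is showing that no graph \emph{outside} $\mcn$ obstructs, and this is where \cite{thatte-esp} spends nearly all of its effort---the results the present paper imports (Lemmas~\ref{lem-k12+2k2}, \ref{lem-k1m}, \ref{lem-mk2}, \ref{lem1.0} and Lemma~\ref{lem-7edges}) are precisely the machinery developed for (a) and (b). Finally, a small factual slip in your (otherwise sound) forward direction: the vertex count does not separate all ambiguous pairs in $\mcn$; by Proposition~\ref{prop-onlyif}, the pairs $\{C_4+K_2,B_1\}$ and $\{B_2,B_3\}$ have equal vertex counts, so their edge-reconstructibility must be taken from known results on small or disconnected graphs, not from $v$.
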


\begin{lem}[Corollary 2.11 \cite{thatte-esp}] \label{lem1.0} Acyclic graphs
(i.e., trees and forests) that are not in $\mcn$ are $Q$‐reconstructible.
\end{lem}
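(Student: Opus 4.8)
The plan is to argue by strong induction on the number of edges $e(G)$, reducing the $Q$-reconstruction of a forest to its edge-reconstruction, which is classically available for forests with enough edges. Concretely, I would show that $\esp(G)$ determines the abstract edge deck of $G$, namely the multiset $\{\overline{G-e}\mid e\in E(G)\}$ of isomorphism classes, and then invoke the fact that a forest with at least four edges is reconstructible from its edge deck (this follows from vertex-reconstructibility of forests together with the construction of the vertex deck from the edge deck \cite{hemminger.1969}). Since any graph $H$ with $\esp(H)\cong\esp(G)$ would share the same abstract edge deck, forest edge-reconstruction then forces $H\cong G$, giving $Q$-reconstructibility; the finitely many forests with few edges are checked directly as base cases.

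First I would extract elementary invariants from $\esp(G)$. The number of edges $e(G)$ is the rank of the maximal element $g_m$. The degree sequence is recovered by counting stars as edge-subgraphs, $q(K_{1,j},G)=\sum_{v}\binom{d_v}{j}$, and these sums for $j=1,\dots,\Delta(G)$ determine the multiset $\{d_v\}$; hence $v(G)$ (all vertices have positive degree, as $G$ has no isolated vertices) and, since $G$ is acyclic, $k(G)=v(G)-e(G)$ are determined. The same invariants are available for every element $g_i\in\esp(G)$, because the down-set $\{g\mid g\le_e g_i\}$ is exactly $\esp(g_i)$, so $e(g_i)$, the degree sequence of $g_i$, $v(g_i)$ and (when $g_i$ is a forest) $k(g_i)$ are reconstructible from the poset.

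The core step is recovering the edge deck. The elements of rank $e(G)-1$ below $g_m$ are precisely the distinct types $\overline{G-e}$, with multiplicity $q(g_i,g_m)$ equal to the number of edges $e$ with $\overline{G-e}\cong g_i$; each such $g_i$ is a forest with $e(G)-1$ edges. If $g_i\notin\mcn$, the inductive hypothesis applied to the sub-poset $\esp(g_i)$ pins down its isomorphism type, so that deck member is recovered concretely, and I would then feed the resulting concrete edge deck into forest edge-reconstruction.

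The hard part will be the deck members $g_i$ that themselves lie in $\mcn$: deleting an edge from a forest not in $\mcn$ can land in an exceptional family, and may do so for the entire deck (for instance, every edge-deletion of $P_5$ yields either $P_4$ or $P_3+K_2$, both in $\mcf_1$). For many such members the basic invariants $e,v,k$ and the degree sequence already force the type (the stars $K_{1,m}$ and matchings $mK_2$ of $\mcf_3$, and the distinct-degree-sequence members of $\mcf_0$, are immediate), but not for the genuinely ambiguous configurations—for example $P_4+K_2$ and $2K_{1,2}$ agree on all of $e,v,k$ and degree sequence—and I expect these to be the crux. I would resolve them by exploiting finer data from the whole poset, namely the multiplicities $q(g_i,g_m)$ and the way each ambiguous member's own down-set embeds, and by verifying that any forest $G\notin\mcn$ whose deck forces such an ambiguity is either disambiguated by this extra data or lies among finitely many small base cases (such as $P_5$) checked by hand. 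Completing this case analysis recovers the concrete edge deck, reduces the claim to forest edge-reconstruction, and closes the induction.
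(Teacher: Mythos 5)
First, a point of comparison: this lemma is not proved in the present paper at all; it is imported verbatim as Corollary~2.11 of \cite{thatte-esp}. So the comparison can only be with the machinery that paper (and this one) actually uses, namely the distinguishing lemmas quoted here as Lemmas~\ref{lem-k12+2k2}, \ref{lem-k1m}, \ref{lem-mk2} and \ref{lem-7edges}. Measured against that, your proposal has a genuine gap, and it appears already at your first step, not only at the place you flag as the crux. You propose to read off the degree sequence of $G$ (and of every poset element) from $q(K_{1,j},\cdot)=\sum_v\binom{d_v}{j}$. But $\esp(G)$ is an \emph{abstract} weighted poset: to evaluate $q(K_{1,j},G)$ you must first know which element of the poset represents $K_{1,j}$, and this is exactly what cannot be deduced from down-sets, because $\esp(K_{1,j})\cong\esp(jK_2)$ (this is why $\mcf_3\subseteq\mcn$) and $\esp(K_3)\cong\esp(K_{1,3})\cong\esp(3K_2)$ (this is why $\mcf_0\subseteq\mcn$). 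An element of rank $j$ whose down-set is a chain with binomial weights can legitimately be labelled either $K_{1,j}$ or $jK_2$, and the two labellings give different degree sequences, different $v$ and different $k$. The same circularity undermines your later claim that deck members in $\mcf_0$ or $\mcf_3$ are ``immediate'' from their degree sequences: for an element lying in $\mcn$, \emph{no} invariant separating it from its $\esp$-partner is computable from its own down-set --- that is precisely what membership in $\mcn$ means --- so any disambiguation must use how the element sits inside the whole poset (its up-set and the weights to elements above it), which your argument never exploits. This difficulty is also why the present paper needs the nontrivial inductive Lemma~\ref{lem-vk} merely to recover $v(G)$ and $k(G)$ for $G\notin\mcn$.

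Second, the step you yourself identify as the crux --- labelling deck members that lie in $\mcn$ --- is the entire mathematical content of the lemma, and it is deferred rather than carried out. Moreover, it is not confined to ``finitely many small base cases'': deleting an edge from an arbitrarily large forest not in $\mcn$ can produce members of $\mcf_3$ or $\mcf_4$ (for example, if $G$ is the tree obtained from $K_{1,m}$ by appending a pendant edge at a leaf, then one deck member is $K_{1,m}$ itself, ambiguous with $mK_2$; if $G=K_{1,3}+P_n+K_2$-type forests are perturbed one similarly hits $\mcf_4$ partners such as $K_3+F$ versus $K_{1,3}+F$). Even the assertion ``$G$ is acyclic, so the ambiguous partner containing a cycle is excluded'' is unavailable, since recognising acyclicity of $G$ from $\esp(G)$ requires first distinguishing $K_3$ from $K_{1,3}$ and $3K_2$, and $C_4$ from $2K_{1,2}$. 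The known resolution is exactly the machinery of \cite{thatte-esp}: distinguish $K_{1,3}$, $K_3$, $3K_2$ via distinguished subgraphs such as $K_{1,2}+2K_2$ or via the classes $\{K_{1,m}^{+e}\}$ and $\{(mK_2)^{+e}\}$ (Lemmas~\ref{lem-k12+2k2}, \ref{lem-k1m}, \ref{lem-mk2}), and settle all small forests by Lemma~\ref{lem-7edges}. Your outer skeleton --- induction on $e(G)$, recovery of the edge deck, then edge-reconstructibility of forests --- is the right shape and is indeed how such results are reduced, but without a correct mechanism for labelling the ambiguous elements the proof does not go through.
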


\begin{lem}[Lemma 2.7 \cite{thatte-esp}] \label{lem-k1m} For all $m\geq 4$, if
$G$ belongs to the class $\{K_{1,m}^{+e}\}\setminus\{K_{1,m+1}\}$, then $G$ is
$Q$-reconstructible, and $K_{1,3}$ and $K_3$ (if $G$ contains $K_3$) are
distinguished.
\end{lem}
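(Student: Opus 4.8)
The plan is to treat the class as the two graphs it actually contains: the tree $B$ obtained from $K_{1,m}$ by attaching a pendant edge to a leaf, and the graph $G_1$ obtained from $K_{1,m}$ by joining two of its leaves (so that $G_1$ contains a triangle). Both have $m+1$ edges, maximum degree $m$, and matching number $\nu=2$, and $G_1$ is the only member containing a triangle. Since $B$ is acyclic and does not lie in $\mcn$, Lemma~\ref{lem1.0} already gives that $B$ is $Q$-reconstructible; so the substantive work is to $Q$-reconstruct $G_1$ and, in both cases, to exhibit $K_{1,3}$ (and $K_3$ for $G_1$) as distinguished elements.

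To reconstruct $G_1$, I would first recover the degree sequence. The key is that the stars form a recognisable tower: for each $t$, a rank-$t$ element of $\esp(G)$ containing no copy of $2K_2$ must be a star $K_{1,t}$, because any $t\ge 3$ pairwise intersecting edges either share a common vertex or (only when $t=3$) form a triangle. Once $K_{1,4}$ is pinned down as the unique rank-four element with no $2K_2$ below it (the triangle possibility being excluded at rank four), the whole tower $K_2<_e P_3<_e K_{1,3}<_e K_{1,4}<_e\cdots<_e K_{1,m}$ is identified, and the weights $q(K_{1,t},G)=\sum_{v}\binom{d_v}{t}$ (for $t\ge 2$) can be read from the poset. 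These numbers, together with $e(G)=m+1$, determine the degree multiset, which for $G_1$ is $(m,2,2,1^{m-2})$. Detecting the triangle (through the $K_3$ element, located as in the next paragraph) then forces the graph: a triangle can only use the three vertices of degree at least $2$, so it must be those three, and the remaining $m-2$ degree-one vertices are then forced onto the centre. This yields the $Q$-reconstruction of $G_1$.

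It remains to distinguish $K_{1,3}$ and $K_3$, which is the crux. Their downward structure in $\esp(G)$ is identical --- each lies above only $P_3$, with the same weight $3$, and above no $2K_2$ --- so they cannot be separated by looking below them. I would instead use the element $K_{1,4}$, which is distinguished by the purely order-theoretic property of being the unique rank-four element with no $2K_2$ beneath it; no weight-preserving automorphism of the abstract poset can move it. Then $K_{1,3}$ is the unique rank-three, $2K_2$-free element lying below $K_{1,4}$, while $K_3$ (present exactly when $G=G_1$) is the rank-three, $2K_2$-free element \emph{not} below $K_{1,4}$. Because these criteria are invariant under isomorphisms of the abstract poset, the corresponding elements are distinguished. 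The main obstacle is precisely this last separation: it is why the hypothesis $m\ge 4$ is essential, since only then does $K_{1,4}$ occur as an edge-subgraph and provide the order-theoretic wedge between $K_3$ and $K_{1,3}$; for $m=3$ no such anchor exists and the two graphs of the class fall among the exceptional families. A subsidiary point to handle carefully is the anchoring of the rank-two elements themselves, namely recognising $2K_2$ as opposed to $P_3$: since $\nu(G)=2$, the element $P_3$ is the unique rank-two element lying below every rank-three element, which identifies $2K_2$ as the other one and makes the ``$2K_2$-free'' conditions above well defined.
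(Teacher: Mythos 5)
The present paper does not actually prove this lemma---it is imported verbatim from \cite{thatte-esp}---so your attempt can only be judged on its own terms, and there it has a genuine gap: you misidentify the class at the outset. You claim $\{K_{1,m}^{+e}\}\setminus\{K_{1,m+1}\}$ ``actually contains'' two graphs, the broom $B$ and the triangle graph $G_1$. But the operation $^{+e}$ allows the added edge to use one or even two new vertices; this is precisely why $K_{1,m+1}$ has to be excluded explicitly, and why the companion Lemma~\ref{lem-mk2} excludes $(m+1)K_2$. Indeed, the paper itself spells the class out for $m=4$ in the proof of Lemma~\ref{lem-atleast4}: $\{K_{1,4}^{+e}\}\setminus\{K_{1,5}\}$ consists of $K_{1,4}+K_2$, $S_5$ and $T_5$. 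So the class has a third member, $K_{1,m}+K_2$ (the star plus a disjoint edge), which your proof never treats: you neither show it is $Q$-reconstructible nor verify that $K_{1,3}$ is distinguished in $\esp(K_{1,m}+K_2)$. As written, the case analysis supporting both conclusions of the lemma is incomplete.

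The gap is one of coverage rather than a failed idea. The omitted graph is a forest not in $\mcn$, so Lemma~\ref{lem1.0} gives its $Q$-reconstructibility exactly as for your $B$; and your order-theoretic anchoring (identify $P_3$ as the unique rank-two element below every rank-three element, identify $K_{1,4}$ as the unique rank-four element with no $2K_2$ beneath it, then separate $K_{1,3}$ from $K_3$ by position relative to $K_{1,4}$) does go through in $\esp(K_{1,m}+K_2)$, whose rank-two and rank-three, $2K_2$-free structure is the same. That anchor argument, and your degree-multiset reconstruction of $G_1$ from the star tower weights $q(K_{1,t},G)=\sum_v\binom{d_v}{t}$ followed by the observation that a triangle in a graph with degree multiset $(m,2,2,1^{m-2})$ forces $G_1$, are both correct; your closing remark about why $m\geq 4$ is essential is also right in spirit (for $m=3$ the poset $\esp(S_4)$ admits an automorphism swapping $K_3$ and $K_{1,3}$). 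But to be a proof of the stated lemma you must add the third graph to every step of the argument, and, more carefully than you do, justify that $B$ itself lies outside $\mcn$ (in particular that it is none of the graphs $B_1,\ldots,B_4$ of Figure~\ref{fig:1}).
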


\begin{lem}[Lemma 2.8 \cite{thatte-esp}] \label{lem-mk2} For all $m\geq 4$, if
$G$ belongs to the class $\{(mK_2)^{+e}\}\setminus\{(m+1)K_2\}$, then $G$ is
$Q$-reconstructible, and $3K_2$ is distinguished.
\end{lem}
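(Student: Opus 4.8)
The plan is to show that any $Q$-reconstruction $H$ of $G$ is isomorphic to $G$, and simultaneously to locate $3K_2$ inside $\esp(G)$ by a purely order-theoretic description. First I would observe that, up to isolated vertices, the class $\{(mK_2)^{+e}\}\setminus\{(m+1)K_2\}$ consists of exactly two graphs: $P_4+(m-2)K_2$, obtained by adding the new edge between two matched vertices, and $P_3+(m-1)K_2$, obtained by adding it as a pendant at a matched vertex (joining two unmatched vertices would give the excluded $(m+1)K_2$). Both have $m+1$ edges and maximum matching exactly $m$, so from the rank function on $\esp(G)$ I immediately recover $e(G)=m+1$, hence $m$.

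Next I would isolate the matching chain order-theoretically. Call an element $x\in\esp(G)$ \emph{uniserial} if its principal ideal $\{y : y\leq_e x\}$ has exactly one element of each rank $1,\dots,e(x)$; equivalently, all edge-subgraphs of $x$ of a given size are pairwise isomorphic. This is an abstract property. An elementary computation (graphs in which any two edges are adjacent are stars or triangles) shows that a graph with $r$ edges is uniserial if and only if it is $rK_2$, $K_{1,r}$, or, only when $r=3$, $K_3$; and that a chain of uniserial elements of length at least $4$ consists entirely of matchings or entirely of stars, since $K_3$ cannot be extended and no $rK_2$ with $r\geq2$ is an edge-subgraph of a star, nor conversely. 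In $\esp(G)$ the longest uniserial chain has length $m$. I would then exclude the star alternative: if that chain were $K_{1,1}<_e\cdots<_e K_{1,m}$, then $K_{1,m}\leq_e H$ and, as $e(H)=m+1$, we would have $H\in\{K_{1,m}^{+e}\}\setminus\{K_{1,m+1}\}$, so Lemma~\ref{lem-k1m} would force $K_{1,3}$ to be distinguished; but $G$ is a linear forest of maximum degree $2$, so the labelling of $\esp(G)$ as $G$ assigns $K_{1,3}$ to no element, and $K_{1,3}$ is not distinguished, a contradiction. Hence the longest uniserial chain is $K_2<_e 2K_2<_e\cdots<_e mK_2$; its rank-$3$ element is forced, under every legitimate labelling, to be $3K_2$, which proves that $3K_2$ is distinguished, and moreover $mK_2\leq_e H$.

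To finish the reconstruction I would first confirm that the maximum matching of $H$ equals $m$: we have $mK_2\leq_e H$, while a matching of size $m+1$ would force $H=(m+1)K_2$, whose maximum element is uniserial, contradicting the fact that the maximum element of $\esp(G)$ is not uniserial. A short structural argument then pins down $H$: fixing a maximum matching $M$ of size $m$, the unique remaining edge cannot join two $M$-unsaturated vertices (that would augment $M$), so it joins either two saturated vertices, giving $H=P_4+(m-2)K_2$, or a saturated and an unsaturated vertex, giving $H=P_3+(m-1)K_2$. Thus $H$ is one of the two graphs of the class, and to decide which I would count the elements of rank $m$ in $\esp(G)$: deleting one edge from $P_4+(m-2)K_2$ yields the three types $\{mK_2,\,P_3+(m-2)K_2,\,P_4+(m-3)K_2\}$, whereas $P_3+(m-1)K_2$ yields only the two types $\{mK_2,\,P_3+(m-2)K_2\}$. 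Since this count is shared by $H$ and $G$, we conclude $H\cong G$.

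The main obstacle is the step that breaks the matching–star duality. Because a matching $rK_2$ and a star $K_{1,r}$ have isomorphic, binomially weighted principal ideals, the uniserial chain on its own cannot reveal which of the two it is built from; this is exactly why $mK_2$ and $K_{1,m}$ fail to be $Q$-reconstructible (they lie in $\mcf_3$). Resolving the ambiguity requires the ambient structure: I break the symmetry globally, using that the maximum element of $\esp(G)$ is not uniserial together with Lemma~\ref{lem-k1m} to exclude the star reading. Once this point is settled, recovering $e(G)$, the maximum matching, and the final two-way case distinction are all routine.
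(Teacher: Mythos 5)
The paper never proves Lemma~\ref{lem-mk2}; it is imported verbatim from \cite{thatte-esp} (Lemma 2.8 there), so there is no in-paper argument to measure yours against. Judged on its own terms, your proof is correct and essentially self-contained: its only external input is Lemma~\ref{lem-k1m}, which is legitimate to use as a black box (it is the earlier, star-side lemma of the cited paper). Your route --- characterising the \emph{uniserial} elements abstractly (matchings, stars, and $K_3$), observing that the unique longest uniserial chain in $\esp(G)$ has length $m$ and that any chain of uniserial graphs of length at least $4$ is all-matchings or all-stars, breaking the matching/star symmetry by playing Lemma~\ref{lem-k1m} against $\Delta(G)=2$, and then pinning down any $Q$-reconstruction $H$ via the maximum-matching argument together with the count of rank-$m$ elements ($3$ for $P_4+(m-2)K_2$ versus $2$ for $P_3+(m-1)K_2$) --- is a clean order-theoretic argument, and you correctly identify the symmetry-breaking step as the real content: for $m=3$ it must fail, since $\esp(P_4+K_2)\cong\esp(T_4)$ exchanges $3K_2$ with $K_{1,3}$, which is exactly why these graphs sit in $\mcf_1$ and the lemma requires $m\geq 4$.

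One line does need patching. When you write that the star reading would give $H\in\{K_{1,m}^{+e}\}\setminus\{K_{1,m+1}\}$, you have only established $K_{1,m}\leq_e H$ and $e(H)=m+1$; before Lemma~\ref{lem-k1m} can be invoked you must also rule out $H\cong K_{1,m+1}$. The fix is the very observation you deploy two sentences later against $(m+1)K_2$: the poset $Q(K_{1,m+1})$ is a chain (its maximal element is uniserial, and it has a single element of rank $2$), whereas $\esp(G)$ has two elements of rank $2$, namely those labelled $P_3$ and $2K_2$ under the labelling by $G$ itself; hence $H\not\cong K_{1,m+1}$. With that sentence inserted, the proof is complete.
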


\begin{lem}[Proposition 2.9 \cite{thatte-esp}] \label{lem-7edges} Graphs with at
most 7 edges, except the ones in $\mcn$, are $Q$-reconstructible.
\end{lem}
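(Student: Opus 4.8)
The plan is to treat this as a finite problem and to peel away the $Q$-reconstructible graphs layer by layer, isolating exactly the families that resist. Since $G$ has no isolated vertices, each of its $v(G)$ vertices has degree at least one, so $v(G)\le 2e(G)\le 14$; hence there are only finitely many candidate isomorphism classes, grouped by $e(G)\in\{1,\dots,7\}$. The engine of the argument is that $\esp(G)$ attaches to each element $H$ the weight $q(H,g_m)$, which is the number of edge-subgraphs of $G$ isomorphic to $H$. The elements are abstract, so part of the task is to identify their isomorphism types from the bottom up: the unique rank-$1$ element is $K_2$, which gives $q(K_2,G)=e(G)$ for free, and using the covering weights together with Kelly-type counting one identifies successively larger proper edge-subgraphs. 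Once enough small elements are pinned down, a battery of numerical invariants becomes $Q$-reconstructible — the degree sequence (from $\sum_v\binom{d_v}{m}=q(K_{1,m},G)$ for $m<e(G)$), the girth, and the cycle spectrum $q(C_k,G)$ for $k<e(G)$ — and the strategy is to show that, outside $\mcn$, these invariants determine $G$ among the finitely many candidates, while inside $\mcn$ they provably cannot.

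First I would dispose of all acyclic graphs in one stroke: by Lemma~\ref{lem1.0}, every forest with at most $7$ edges that is not in $\mcn$ is already $Q$-reconstructible, so only graphs containing a cycle remain. For these I would immediately clear the near-degenerate families using Lemmas~\ref{lem-k1m} and~\ref{lem-mk2}, which certify $Q$-reconstructibility (and distinguish the offending $K_{1,3}$, $K_3$, or $3K_2$ components) for graphs obtained from a large star or a large matching by adding one edge. What is left is a short, explicit list: graphs with $4\le e(G)\le 7$ that contain a cycle and are neither near-stars nor near-matchings. I would process this list in increasing order of $e(G)$, and within each value of $e(G)$ in increasing order of girth and triangle count, using the reconstructible invariants above to separate candidates. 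In most cases the degree sequence together with the triangle count $q(K_3,G)$ already determines $G$ uniquely, and the few remaining ambiguities are resolved by counting a slightly larger edge-subgraph (such as $P_4$, $C_4$, or a triangle-plus-pendant) whose weight is still visible in the poset.

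The delicate part — and the genuine content of the lemma — is to prove that the listed exceptions really do fail and that nothing else fails. On the negative side, for each graph in $\mcf_2$ (such as $C_4$, $2K_{1,2}$, $C_4+K_2$, $P_6$, and the $B_i$) and for each member of $\mcf_4$ with at most $7$ edges, I would exhibit a non-isomorphic partner sharing the same abstract edge-subgraph poset, certifying non-$Q$-reconstructibility; consistency with Theorem~\ref{thm-q-erc} provides a useful check. The structural source of the $\mcf_4$ failures is that $K_3$ and $K_{1,3}$ have identical edge-subgraph posets below their tops (both sit over three copies of $K_{1,2}$ and three copies of $K_2$), so within a disjoint union one may trade a $K_3$ component for a $K_{1,3}$ component without disturbing any weight $q(H,G)$ for proper $H$; the condition $r\neq s$ in $\mcf_4$ is exactly what prevents a compensating symmetry from restoring reconstructibility.

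I expect the main obstacle to be the positive direction for the cycle-containing graphs with $6$ and $7$ edges: here the candidate list is longest, several non-isomorphic graphs can share a degree sequence and even a triangle count, and one must check that in every case not landing in $\mcn$ some reconstructible weight separates them. The work is to choose, for each near-collision, a witnessing edge-subgraph $H$ with $e(H)<e(G)$ whose count differs between the candidates, and to verify simultaneously that for the genuine exceptions no such witness exists — that is, that the two posets are isomorphic all the way up to the top. This simultaneous bookkeeping, ``separate everything outside $\mcn$ and confirm indistinguishability inside $\mcn$,'' over a finite but sizeable list is where the care is concentrated; the earlier lemmas and the reconstructibility of the invariant battery keep it finite and tractable rather than open-ended.
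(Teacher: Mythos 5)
A preliminary caveat: the paper you are matched against contains no proof of this lemma at all --- it is imported verbatim as Proposition~2.9 of \cite{thatte-esp}, where it rests on an exhaustive analysis of the finitely many graphs with at most seven edges. So the comparison is with that source's strategy, and in overall shape your proposal agrees with it: reduce to a finite list grouped by $e(G)$, clear the acyclic graphs via Lemma~\ref{lem1.0} and the near-star/near-matching families via Lemmas~\ref{lem-k1m} and~\ref{lem-mk2}, then settle the residual cycle-containing graphs case by case. Note also that you take on more than the lemma asserts: that the graphs in $\mcn$ actually \emph{fail} to be $Q$-reconstructible is the content of Theorem~\ref{thm-q-erc}, not of this lemma, so your negative direction is a useful consistency check but not part of what needs proving here.

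The genuine gap is the circularity of your ``invariant battery''. The elements of $\esp(G)$ are unlabelled, so $q(K_{1,m},G)$, the girth, and $q(C_k,G)$ are not readable from the poset until you have identified \emph{which} abstract elements represent $K_{1,m}$ and $C_k$ --- and this identification is precisely what fails from below. The ideals of $K_{1,2}$ and $2K_2$ are isomorphic; so are those of $K_{1,3}$, $K_3$ and $3K_2$, and of $C_4$ and $2K_{1,2}$ (this is exactly why these graphs populate $\mcf_0$, $\mcf_2$ and $\mcf_3$). Hence your bottom-up ``Kelly-type counting'' cannot pin these elements down, the degree sequence is not available ``for free'', and the cycle spectrum claim already founders on the $C_4$ versus $2K_{1,2}$ ambiguity. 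Resolving such ambiguities requires contextual, top-down distinguishing arguments --- compare Lemmas~\ref{lem-t4-k12+2k2} and~\ref{lem-atleast4} of the present paper, where witnesses such as $T_4$, $S_4$ or $K_{1,4}+K_2$ are used to separate $P_4$ from $K_{1,2}+K_2$ and $K_{1,3}$ from $K_3$. Your fallback --- choose for each near-collision a witnessing subgraph whose count differs between the candidates --- is the right repair, but it is announced rather than executed: for a lemma whose entire content is a finite verification, the proof \emph{is} that table of witnesses (or the computer check), and the proposal stops exactly where the work begins.
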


\begin{thm}[Theorem 3.7 \cite{thatte-lattice}] \label{thm-cpl-p} If $G$ does not
belong to $\{K_{1,n}(n>1),nK_2(n>1),P_4,K_{1,2}+K_2,T_4,P_4+K_2\}$, then
$\overline{P}(G)$ can be constructed from $\cpl(G)$.
\end{thm}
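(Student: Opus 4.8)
The plan is to reconstruct $\overline{P}(G)$ from $\cpl(G)$ by first recovering the global parameters of $G$, then the multiset of connected components of every lattice element, and finally the induced-subgraph counts by an inversion analogous to the one in Lemma~\ref{lem-omega}. First I would read off $v(G)$ and, for each element $h$, its number of components $k(h)=v(G)-\gamma(h)$ directly from the rank function $\gamma$; the bottom element $h_1$ is the totally disconnected graph on $v(G)$ vertices, and the top element $h_m$ is $G$ itself. This already determines, for every $h$, how many isolated vertices and how many non-trivial components it has.

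The structural heart is a factorization lemma: if a lattice element $h$ has connected components $C_1,\dots,C_t$ (as a spanning subgraph of $G$, some possibly $K_1$), then refining $h$ by a connected partition amounts to refining each $C_j$ independently, so the lower interval $[h_1,h]$ is isomorphic, as a weighted poset, to the product $\prod_j \cpl(C_j)$. I would use this together with induction on $v(G)$: for any element other than $h_m$, each non-trivial component $C_j$ is a proper induced subgraph, hence $v(C_j)<v(G)$, and its bond lattice $\cpl(C_j)$ occurs as a factor of $[h_1,h]$; by the inductive hypothesis I can reconstruct $\overline{P}(C_j)$ and in particular label every element $h$ with the multiset of its components. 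The join-irreducible elements (those covering a unique element) are exactly the connected induced subgraphs $C$ of $G$ with at least one edge, and reading off the weight $\omega(\,\cdot\,,h_m)$ of such an element gives $p(C,G)$ for every connected $C$.

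With the connected induced subgraphs and their counts in hand, I would recover the counts $p(H,G)$ for arbitrary $H$ by Möbius inversion over the connected-partition lattice, in the same spirit as the expansion used in Lemma~\ref{lem-omega}: the weight $\omega$ of a lattice element of a given component type counts all connected partitions realising that type, whereas the induced count $p(H,G)$ counts only those configurations in which no edge of $G$ joins two distinct parts, and the correction terms are expressible through the weights $\omega$ of coarser elements. Having all counts $p(H,G)$ and all induced containments (which follow from the now-labelled connected building blocks), I would assemble the weighted poset $\overline{P}(G)$.

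The main obstacle is this final inversion, specifically the task of separating bond-lattice elements that genuinely arise from an induced subgraph from those that arise only after deleting edges between parts (for instance $2K_2$ occurs in the bond lattice of $P_4$, although $P_4$ has no induced $2K_2$). This separation, equivalently the recovery of the isolated-vertex choices that $\overline{P}(G)$ records but that $\cpl(G)$ suppresses by fixing $v(h)=v(G)$, is precisely what breaks down for the listed families: for $K_{1,n}$ ($n>1$) and $nK_2$ ($n>1$) the lattice is too degenerate to distinguish a deleted edge from a chosen isolated vertex, while each of $P_4$, $K_{1,2}+K_2$, $T_4$, and $P_4+K_2$ shares its bond lattice with a companion graph whose induced subgraph poset differs. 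I would dispose of these finitely many exceptional families by direct computation and show that for every other $G$ the inversion is determined, completing the construction.
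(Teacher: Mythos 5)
First, a point of reference: the paper you were given does not prove this statement at all --- it imports it verbatim as Theorem~3.7 of \cite{thatte-lattice} --- so your attempt has to be judged on its own merits, and it has genuine gaps, concentrated exactly where the theorem's difficulty lies: deciding, inside the \emph{abstract} lattice, which elements have a single non-trivial component. Two of your concrete claims are false. (i) You cannot ``read off $v(G)$ directly from the rank function'': the normalisation $\gamma(h_1)=0$ only yields $\gamma(h_m)=v(G)-k(G)$, and since the elements of $\cpl(G)$ are unlabelled, neither $v(G)$ nor the component counts $k(h)=v(G)-\gamma(h)$ are available until $v(G)$ has itself been reconstructed; indeed $\cpl(K_{1,n})\cong\cpl(nK_2)$ shows that $v(G)$ is not determined in general, which is precisely why these graphs are excluded. (ii) The elements covering a unique element are \emph{not} the connected induced subgraphs. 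In the isomorphism-collapsed poset, the element $P_4+(v-4)K_1$ covers two elements (splitting the path gives $P_3+K_1$ or $2K_2$), so it is not ``join-irreducible'', whereas $mK_2+(v-2m)K_1$ covers only the element $(m-1)K_2+\cdots$, so it is; the characterisation fails in both directions, and with it your extraction of $p(C,G)$ as $\omega(\,\cdot\,,h_m)$ (that identity is correct for connected elements, but you have no way of finding them).

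The deeper gap is the inductive labelling step. Your induction invokes the theorem itself on the components $C_j$ of a lattice element, but the theorem has exceptions, and exceptional graphs occur as components of lattice elements of perfectly non-exceptional graphs: already in $\cpl(P_5)$ the element $P_4+K_1$ has lower interval isomorphic to $\cpl(P_4)\cong\cpl(K_{1,2}+K_2)$, so the interval $[h_1,h]$ provably does \emph{not} determine the multiset of components of $h$. (Your factorisation claim is also not literally true: for $h=2K_2+\cdots$ the interval $[h_1,h]$ is a three-element chain, not the four-element product $\cpl(K_2)\times\cpl(K_2)$, because the abstract poset collapses isomorphic elements.) Resolving these ambiguities requires using how $h$ sits in the whole lattice, not merely what lies below it; that recognition problem is the actual content of the cited proof, and it is absent from your argument. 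Finally, the exceptional families are infinite ($K_{1,n}$ and $nK_2$ for every $n>1$), so ``finitely many exceptional families by direct computation'' cannot close the argument. What is sound here is the architecture --- identify the connected elements, read off $p(C,G)=\omega(C+\text{isolated vertices},\,h_m)$, then recover counts of disconnected induced subgraphs by an inversion correcting for edges between blocks --- but the identification step that this architecture presupposes is exactly what remains unproved.
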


Next we prove several propositions and lemmas that eventually imply the main
theorem. We will first prove part (1) of the theorem, and then use
Theorem~\ref{thm-cpl-p} to prove the second part.

\begin{prop}[The ``only if'' parts of the main theorem] \label{prop-onlyif} \text{}
  \begin{enumerate}
  \item If $G$ belongs to $\mcm$, then $\cpl(G)$ cannot be constructed from
$\esp(G)$.
  \item If $G$ belongs to $\mcn$, then $\isp(G)$ cannot be constructed from
$\esp(G)$.
  \end{enumerate}

\end{prop}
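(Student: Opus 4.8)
The plan is to prove both non-constructibility statements by the standard twin argument: for each graph $G$ in the relevant family I will exhibit a graph $G' \not\cong G$ with $\esp(G) = \esp(G')$ but with $\cpl(G) \neq \cpl(G')$ (for part (1)) or $\isp(G) \neq \isp(G')$ (for part (2)). Once such a twin is produced, $\cpl$ (resp. $\isp$) cannot be a function of $\esp$ on the pair $\{G,G'\}$, so it cannot be constructed from $\esp(G)$.

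The distinguishing invariants are cheap to read off. Since any $\esp$-twins share the rank function $e$, they satisfy $e(G) = e(G')$. The maximum element of $P(G)$ is $G$ and the poset is graded by vertex number, so $v(G)$ is determined by $\isp(G)$; the maximum element $h_m$ of $\cpl(G)$ is the coarsest connected partition, so $k(h_m) = k(G)$ and hence $\gamma(h_m) = v(G) - k(G)$ is determined by $\cpl(G)$. Consequently, for $\esp$-twins $G,G'$: if $v(G) \neq v(G')$ then $\isp(G) \neq \isp(G')$, and if $v(G) - k(G) \neq v(G') - k(G')$ (equivalently, if their cycle ranks differ) then $\cpl(G) \neq \cpl(G')$.

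I would then organize the argument by family. For $\mcf_0 = \{K_3,K_{1,3},3K_2\}$, a direct computation shows these three share one abstract edge-subgraph poset (a three-element chain with weights $\binom{j}{i}$); their vertex counts $3,4,6$ are pairwise distinct, settling part (2), while $K_3$ (cycle rank $1$) against either of $K_{1,3},3K_2$ (cycle rank $0$) gives distinct lattice heights, settling part (1). For $\mcf_3$ the pair $K_{1,m},mK_2$ is an $\esp$-twin pair (both edge-subgraph posets are the chain with binomial weights), and $v(K_{1,m}) = m+1 \neq 2m = v(mK_2)$ settles part (2); these lie only in $\mcn$, so part (1) does not apply. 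For $\mcf_1$ the members are forests paired by explicit $\esp$-twins with differing vertex counts (for instance $P_4$ and $K_{1,2}+K_2$ share an abstract edge-subgraph poset once the two rank-$2$ types are matched against each other, with $v = 4 \neq 5$), again settling part (2) only. For $\mcf_4$, replacing one $K_3$-component by a $K_{1,3}$-component (possible when $r \geq 1$, and symmetrically when $s \geq 1$; the hypothesis $r \neq s$ guarantees the result is non-isomorphic) yields a twin: since $\esp(K_3) = \esp(K_{1,3})$ and the edge-subgraph poset of a disjoint union is governed by the abstract edge-subgraph posets of the components, the swap preserves $\esp$, while it changes both $v$ and $v - k$ by $1$, settling both parts at once.

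The main obstacle is twofold. First, the sporadic family $\mcf_2 = \{C_4, 2K_{1,2}, C_4+K_2, B_1, P_6, B_2, B_3, B_4\}$ must be handled graph-by-graph: I would pair each member with its explicit $\esp$-twin (from the analysis underlying Theorem~\ref{thm-q-erc} and Lemma~\ref{lem-7edges}, together with the graphs of Figure~\ref{fig:1}) and verify the required inequality. For most pairs the coarse invariants already suffice (e.g. $C_4$ and $2K_{1,2}$ have $v = 4 \neq 6$ and $v - k = 3 \neq 4$), but where $v$ or $v - k$ happens to coincide between a graph and its twin, I must compute the weighted bond lattice or induced subgraph poset directly; these are finite but delicate checks. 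Second, for $\mcf_4$ the claim that the $K_3 \leftrightarrow K_{1,3}$ swap preserves $\esp$ requires the multiplicativity of $\esp$ under disjoint union together with the exact role of the component set $\mcx$ (which excludes $K_3$ and $K_{1,3}$ precisely so that the swap induces a well-defined isomorphism of abstract edge-subgraph posets rather than merely permuting a multiset of graph labels); establishing this cleanly, rather than by ad hoc counting, is where I expect the real care to be needed, and I would lean on the corresponding development in \cite{thatte-esp}.
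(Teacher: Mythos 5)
Your overall method---producing, for each member of the relevant family, an $\esp$-twin $H$ with $\esp(G)\cong\esp(H)$, and then separating the twins by $v$ (readable from $\isp$ as the weight from the minimal to the maximal element) and by $v-k$ (readable from $\cpl$ as the rank of the maximal element)---is exactly the paper's argument, and your treatment of $\mcf_0$, $\mcf_1$, $\mcf_3$, and (up to the same deferred finite verifications of the sporadic pairs) $\mcf_2$ agrees with its proof. The problem is $\mcf_4$, where your twin construction is not merely in need of extra care: it is wrong.

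You replace a \emph{single} $K_3$-component by a $K_{1,3}$-component, i.e.\ you pass from $G=rK_3+sK_{1,3}+F$ to $H=(r-1)K_3+(s+1)K_{1,3}+F$, justifying $\esp$-invariance by the claim that the abstract edge-subgraph poset of a disjoint union is governed by the abstract edge-subgraph posets of its components. That claim is false. Take $G=2K_3\in\mcf_4$ (so $r=2$, $s=0$, $F=\Phi$); your twin is $H=K_3+K_{1,3}$. At rank $3$ the poset $\esp(G)$ has exactly two elements, namely $K_3$ and $K_{1,2}+K_2$, whereas $\esp(H)$ has three, namely $K_3$, $K_{1,3}$ and $K_{1,2}+K_2$; hence $\esp(G)\not\cong\esp(H)$ even though $\esp(K_3)\cong\esp(K_{1,3})$. (Indeed $K_3+K_{1,3}$ is an $r=s$ graph, excluded from $\mcf_4$ precisely because such graphs \emph{are} $Q$-reconstructible.) The twin the paper uses is the full swap $H=sK_3+rK_{1,3}+F$, where the poset isomorphism is a global involution exchanging, inside every subgraph type, the number of triangle components with the number of star components; the hypothesis $r\neq s$ is exactly what makes this $H$ non-isomorphic to $G$. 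Your parenthetical that $r\neq s$ ``guarantees the result is non-isomorphic'' does not fit your construction (for a single replacement, non-isomorphism is automatic), which suggests the two constructions were conflated. With the correct twin, $v$ and $v-k$ each change by $\lvert r-s\rvert$, not by $1$, and both parts of the proposition then go through for $\mcf_4$ as you intended.
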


\begin{proof}
  \begin{enumerate}
  \item The graphs $3K_2, K_{1,3}, K_3$ have the same $\overline{Q}(.)$, and the
    graphs $3K_2, K_{1,3}$ have the same $\overline{\Omega}(.)$, but
    $\overline{\Omega}(K_3)$ is different.

    If $G\in \mcf_4$, then suppose that $G = rK_3+sK_{1,3}+F$ and
    $H=sK_3+rK_{1,3}+F$, where $r\neq s$. Now $\cpl(G)\neq \cpl(H)$: this
    follows from the fact that the rank of the maximal element in $\cpl(.)$ is
    $v(.) - k(.)$, hence the two posets have different heights.
    
    Graphs in $\mcf_2$ form pairs so that graphs in each pair have the same
    abstract edge subgraph poset but different abstract bond lattice.

  \item If $G,H \in \mcn$ such that $\esp(G)\cong \esp(H)$ and $G \not\cong H$,
    then there are only two cases in which $v(G) = v(H)$, which are
    $\{G,H\} = \{C_4+K_2, B_1\}$ and $\{G,H\} = \{B_2,B_3\}$. In these cases, we
    verify that $\isp(G) \not\cong \isp(H)$. In all other cases,
    $v(G)\neq v(H)$, hence $G$ and $H$ cannot have the same abstract induced
    subgraph poset. Hence if $G\in \mcn$, then $\isp(G)$ cannot be constructed
    from $\esp(G)$.\qedhere
  \end{enumerate}
\end{proof}

\begin{prop}\label{p-f1f3} If $G\in \mcf_1 \cup \mcf_3$, then $\cpl(G)$ can be
  constructed from $\esp(G)$.
\end{prop}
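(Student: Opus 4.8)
The plan is to prove that $\cpl$ is a $Q$-reconstructible invariant for these graphs, i.e. that every $Q$-reconstruction $H$ of $G$ satisfies $\cpl(H) \cong \cpl(G)$. Note that Lemma~\ref{lem-omega} cannot be applied in the usual way here: for $G \in \mcf_1 \cup \mcf_3$ the order $v(G)$ is itself not $Q$-reconstructible (for instance $K_{1,m}$ and $mK_2$ have the same abstract edge-subgraph poset but different numbers of vertices), so we cannot determine the vertex and component counts needed to run that recursion. Instead, since each of these graphs lies in a small and explicitly describable confusion class, I would identify the class exactly and then check that $\cpl$ is constant on it.

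First I would treat $\mcf_3$. Using the characterisation of graphs whose abstract edge-subgraph poset is a chain (the weights along the chain being the binomial coefficients $\binom{m}{j}$), the $Q$-reconstructions of $K_{1,m}$ and of $mK_2$ are, for $m \neq 3$, exactly the pair $\{K_{1,m}, mK_2\}$; the value $m=3$ is excluded from $\mcf_3$ precisely because $K_3$ joins the class there. It then remains to compute both bond lattices. A connected partition of $K_{1,m}$ is determined by the set of leaves merged with the centre, and a connected partition of $mK_2$ by the set of edges retained; in both cases the distinct spanning subgraphs are indexed by $j = 0, \dots, m$ and form a chain, and the weight $\omega$ from the rank-$i$ element to the rank-$j$ element equals $\binom{j}{i}$ in both cases. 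Hence $\cpl(K_{1,m}) \cong \cpl(mK_2)$, and $\cpl(G)$ is constructible.

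Next I would treat the four graphs of $\mcf_1$, which split into the two confusion pairs $\{P_4, K_{1,2}+K_2\}$ and $\{P_4+K_2, T_4\}$; in each pair the two members have isomorphic abstract edge-subgraph posets (for the first pair the isomorphism interchanges the two rank-two elements $P_3$ and $2K_2$). That these pairs are complete, i.e. no further graph has the matching abstract edge-subgraph poset, follows from Lemma~\ref{lem-7edges} together with the explicit description of $\mcn$, since every such $Q$-reconstruction has at most four edges and, not being $Q$-reconstructible, must itself lie in $\mcn$. For each pair I would then compute the two abstract bond lattices directly and exhibit a weight-preserving isomorphism between them; for $\{P_4, K_{1,2}+K_2\}$, for example, both lattices have a single element of each rank $0,1,3$ and two incomparable elements of rank $2$, and the isomorphism again swaps these two rank-two elements (matching the from-below and to-above weights). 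This shows $\cpl$ is constant on each confusion class, completing the case.

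The routine but delicate part is the bond-lattice bookkeeping: computing the counts $\omega$ and matching them across the isomorphisms, and confirming completeness of the confusion classes (no further $Q$-reconstructions), which is where the explicit structure of $\mcn$ and Lemma~\ref{lem-7edges} are essential. The general $\mcf_3$ computation is clean and uniform in $m$, so I expect the main effort to be the finite verification for the two $\mcf_1$ pairs.
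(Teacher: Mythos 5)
Your proposal is correct, and its overall skeleton matches the paper's: in both, one determines the complete set of $Q$-reconstructions of $G$ (the ``confusion class'') and then checks that $\cpl$ is constant on it, which is exactly what constructibility of $\cpl(G)$ from $\esp(G)$ requires. The genuine difference is in how you pin down the confusion class for $\mcf_3$. The paper argues by induction on $m$: given $\esp(G)$ with $e(G)=m+1$, the rank-$m$ element must (by the induction hypothesis) represent $K_{1,m}$ or $mK_2$, so $G$ is a one-edge extension of one of these, and Lemmas~\ref{lem-k1m} and~\ref{lem-mk2} then force $G\in\{K_{1,m+1},(m+1)K_2\}$. You instead invoke the structural fact that a graph whose abstract edge-subgraph poset is a chain must be a star, a perfect matching, or $K_3$ (since a chain forces all $2$-edge subgraphs to be isomorphic, i.e.\ every two edges adjacent or every two edges disjoint), which immediately yields the confusion class and also explains transparently why $m=3$ is exceptional. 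Your route is more elementary and self-contained -- it bypasses Lemmas~\ref{lem-k1m} and~\ref{lem-mk2} entirely -- but note that the chain characterisation is not among the paper's quoted results, so you would need to include its (short) proof. The paper's route, conversely, reuses machinery it has already imported and is stylistically uniform with the inductive arguments elsewhere in the paper. Two further points in your favour: you explicitly compute and match the bond lattices within each pair (chains with binomial weights $\binom{j}{i}$ for $\{K_{1,m},mK_2\}$, and the rank-$2$ swap for $\{P_4,K_{1,2}+K_2\}$), which the paper asserts without proof; and for $\mcf_1$ you correctly note that completeness of the pairs requires not only Lemma~\ref{lem-7edges} (which handles reconstructions outside $\mcn$) but also a finite scan of the $3$- and $4$-edge members of $\mcn$, a step the paper's one-line citation of Lemma~\ref{lem-7edges} leaves implicit.
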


\begin{proof} Graphs in each pair $\{K_{1,m}, mK_2\}, m \neq 3$ have the same
  $\esp(.)$ and the same $\cpl(.)$. We prove that no other graph has the same
  $\esp(.)$ as one of these graphs by induction on the number of edges of
  $G\in \{K_{1,m}, mK_2\}$. If $e(G)=2$ or $e(G)=4$ then the result is
  true. Suppose that the result is true for all $G\in \{K_{1,m}, mK_2\}$ such
  that $m\geq 4$. If $e(G)=m+1$, then by induction hypothesis, there is an
  element $x\in\esp(G)$ such that $x$ represents $K_{1,m}$ or $mK_2$. By Lemmas
  \ref{lem-k1m} and \ref{lem-mk2}, we have $G\in \{K_{1,m+1}, (m+1)K_2\}$.

  Graphs in each pair $\{P_4, K_{1,2}+K_2\}$, $\{P_4+K_2, T_4\}$ have the same
  $\esp(.)$ and the same $\cpl(.)$. By Lemma \ref{lem-7edges}, no other graph
  has the same $\esp(.)$ as one of these graphs.
\end{proof}

\begin{lem} \label{lem-vk} If $G \not \in \mcn$, then $v(G)$ and $k(G)$ are
  $Q$-reconstructible.
\end{lem}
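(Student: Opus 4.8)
The number of edges $e(G)$ is immediately available, since it is the rank $e(g_m)$ of the maximal element of $\esp(G)$. If $e(G)\le 7$, then $G\notin\mcn$ is $Q$-reconstructible by Lemma~\ref{lem-7edges}, and if $G$ is acyclic then $G$ is reconstructed by Lemma~\ref{lem1.0}; in either case $v(G)$ and $k(G)$ follow at once. The plan is therefore to assume $e(G)\ge 8$ and that $G$ contains a cycle, and to recover $v(G)$ and $k(G)$ directly from the poset.

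For $v(G)$ I would reconstruct the degree sequence. Writing $D_j=\sum_{u\in V(G)}\binom{d(u)}{j}$, one has $D_1=2e(G)$ and $D_j=q(K_{1,j},G)$ for $j\ge 2$, while the absence of isolated vertices yields the inclusion--exclusion identity $v(G)=\sum_{j\ge1}(-1)^{j+1}D_j$ (a finite alternating sum terminating at $j=\Delta(G)$). It thus suffices to locate the star elements $K_{1,j}$ in $\esp(G)$ and read off their weights. Here $K_2$ is the unique element of rank $1$, and for $j\ge 4$ the star $K_{1,j}$ is the unique element of rank $j$ with $2K_2\not\leq_e K_{1,j}$ (a graph with at least four edges and matching number $1$ must be a star). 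What remains is to pin down the two elements of rank $2$, namely $K_{1,2}$ and $2K_2$, and the small stars $K_{1,2},K_{1,3}$. Since $G\notin\mcn$ and $e(G)\ge 8$, the graph $G$ is not a star, so both rank-$2$ elements occur and $q(K_{1,2},G)+q(2K_2,G)=\binom{e(G)}{2}$; moreover $G$ contains a symmetry-breaking rank-$3$ witness, either $K_{1,3}$ (when $\Delta(G)\ge 3$) or $3K_2$ (when $\Delta(G)\le 2$, in which case $e(G)\ge 8$ forces matching number at least $3$). Such a witness lies above exactly one rank-$2$ element, and separating the small stars from $2K_2$, $K_3$ and the remaining low-rank types is the delicate step, deferred to the obstacle below; once it is carried out, every $D_j$ is determined and the alternating sum gives $v(G)$.

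For $k(G)$ I would use that the maximum number of edges in an acyclic edge-subgraph of $G$ equals $v(G)-k(G)$, the size of a spanning forest (valid because $G$ has no isolated vertices, so this forest meets every vertex). Hence $k(G)=v(G)-\max\{e(H)\mid H\leq_e G,\ H\text{ acyclic}\}$, which reduces the problem to recognising the acyclic elements of $\esp(G)$, equivalently to identifying the cycles: $C_n$ can be recognised as an element of rank $n$ every edge-deletion of which is a path, with the paths $P_n$ identified recursively from $P_2=K_2$ and $P_3=K_{1,2}$. An element $H$ is then acyclic precisely when $q(C_n,H)=0$ for all $n\ge 3$, so the maximum acyclic rank, and thus $k(G)$, can be computed.

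The main obstacle is the local indistinguishability of the three graphs $3K_2$, $K_3$, $K_{1,3}$, which have isomorphic abstract edge-subgraph posets (cf.\ Proposition~\ref{prop-onlyif}); this is exactly what makes the cherry-versus-$2K_2$ disambiguation --- and the separation of $K_{1,3}$ from $K_3$ needed for $D_3$ --- delicate, since a rank-$3$ element lying above a single rank-$2$ element could a priori be any of the three, pinning the cherry in two cases and $2K_2$ in the third. I expect the crux of the argument to be resolving this ambiguity globally, using $G\notin\mcn$ (and $e(G)\ge 8$) to guarantee that the structure above such witnesses distinguishes the ``matching'' type $3K_2$ from the ``star/triangle'' types $K_{1,3},K_3$. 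This is precisely the point at which the exceptional family $\mcn$, and in particular $\mcf_0=\{3K_2,K_3,K_{1,3}\}$, enters, so the bulk of the work and its only genuine difficulty should lie here.
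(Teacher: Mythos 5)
Your reduction contains correct ingredients: the identity $v(G)=\sum_{j\ge1}(-1)^{j+1}D_j$ with $D_1=2e(G)$ and $D_j=q(K_{1,j},G)$ is valid for graphs without isolated vertices, the spanning-forest formula $k(G)=v(G)-\max\{e(H)\mid H\leq_e G,\ H\text{ acyclic}\}$ is valid, and stars $K_{1,j}$ with $j\ge4$ are indeed recognisable once the low-rank elements are labelled. But there is a genuine gap, and you name it yourself: everything hinges on identifying, inside the abstract poset, which rank-$2$ element is $K_{1,2}$ and which rank-$3$ element is $K_{1,3}$ (as opposed to $K_3$ or $3K_2$), and this step is explicitly ``deferred'' and never carried out. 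Both halves of your argument depend on it: $D_2$ and $D_3$ require $q(K_{1,2},G)$ and $q(K_{1,3},G)$, and your recognition of paths, cycles and hence acyclic elements bottoms out at $P_3=K_{1,2}$ and $C_3=K_3$, which are exactly the ambiguous elements ($K_3$ and $K_{1,3}$ have isomorphic abstract posets and both have every edge-deletion equal to $P_3$, so no local test separates them). Worse, the deferred step is in some cases impossible in the form you want: for $G=rK_3+rK_{1,3}+F$ with $F$ a disjoint union of paths and long cycles (such $G$ is not in $\mcn$ because the multiplicities are equal), $\esp(G)$ admits a weight-preserving symmetry exchanging the elements representing $K_3$ and $K_{1,3}$, so no argument can determine which is which; one could only hope to show that the ambiguity never affects the computed weights, and that requires a structural case analysis of precisely the kind that generates the family $\mcf_4$.

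This disambiguation is also not a detail the paper glosses over elsewhere: it is the content of the separate Lemmas~\ref{lem-t4-k12+2k2} and~\ref{lem-atleast4} (which themselves invoke Lemma~\ref{lem-vk}, so you cannot borrow them without circularity), together with Lemmas~\ref{lem-K_1,2+2K_2-notT_4} and~\ref{lem-T_4-not-K_1,2+2K_2}. The paper's proof of Lemma~\ref{lem-vk} avoids the issue entirely by a different route: induction on $e(G)$ with base case $e(G)=7$ (Lemma~\ref{lem-7edges}). Each rank-$m$ element $x$ of $\esp(G)$, i.e.\ each $\overline{G-e}$, that is not in $\mcn$ is annotated with $(v(x),k(x))$ by the induction hypothesis; since $G$ contains a cycle, $v(G)$ is the maximum annotated $v(x)$, provided some edge $e$ on a cycle satisfies $\overline{G-e}\notin\mcn$. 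The real work there is the combinatorial verification that adding an edge inside a component of a graph in $\mcf_4$ always yields either a graph in $\mcn$ or a graph possessing such a good edge; then $k(G)$ is the minimum annotated $k(x)$ over rank-$m$ elements with $v(x)=v(G)$. To salvage your approach you would have to prove the deferred identification (or weight-equality) claim, which is substantially harder than the lemma itself.
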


\begin{proof} We prove the result by induction on the number of edges of
  $G$. All graphs on at most 7 edges that are not in $\mcn$ are
  $Q$-reconstructible (Lemma~\ref{lem-7edges}). Hence we take $e(G) = 7$ as the
  base case, for which the result is true. Suppose now that the result is true
  for all $G\not\in \mcn$ such that $7 \leq e(G) \leq m$. Let $G \not \in \mcn$
  be a graph on $m+1$ edges.
  
  If $G$ is an acyclic graph, then $G$ is $Q$-reconstructible (Lemma
  \ref{lem1.0}), thus $k(G),v(G)$ are known. So assume that $G$ contains a
  cycle. Hence each edge $e$ in $G$ that is on a cycle is such that
  $k(G-e) = k(G)$ and $v(G-e) = v(G)$ and $G-e$ has no isolated vertices (since
  we have assumed that $G$ has no isolated vertices).

  Now consider an arbitrary element $x$ of rank $m$ in $\esp(G)$. Suppose
    that $x \coloneqq \overline{G-e}$ (minus the resulting isolated
    vertices). We claim that $x$ cannot be in $\mcf_0 \cup \mcf_1\cup \mcf_2$ since
    $m \geq 7$ and all graphs in $\mcf_0\cup \mcf_1\cup \mcf_2$ have at most 6 edges. We
    also assume that $x$ cannot be in $\mcf_3$ since in that case $G$ would be
    $Q$-reconstructible (by Lemma~\ref{lem-k1m} and \ref{lem-mk2}). If
    $x \not\in \mcn$, then we know $v(x)$ and $k(x)$ by induction hypothesis;
    and we annotate $x$ with this information.  If it is known from $\esp(G)$
    that $G$ has an edge $e$ such that $e$ is on a cycle and $\overline{G-e}$ is
    not in $\mcf_4$, then $v(G)$ is the maximum $v(H)$ among all graphs of rank
    $m$ in $\esp(G)$ that are not in $\mcn$. Next we show that such an edge must
    exist.  

   Consider a graph $H \in \mcf_4$. Suppose that $G = \overline{H+uv}$, where
    $u$ and $v$ are non-adjacent vertices in the same component of $H$ (so that
    $uv$ is on a cycle in $H+uv$). Each component of $H$ is in
    $\mcx \cup \{K_{1,3}, K_3\}$. We verify that for all possible ways of adding
    an edge $uv$ between two vertices of the same component of $H$, we either
    obtain a graph in $\mcn$ or obtain a graph $G$ which contains an edge $e$ on
    a cycle such that $\overline{G-e}$ is not in $\mcn$. Since $G\not \in \mcn$,
    it is the latter case. Thus $v(G)$ is reconstructible.

  Now, $k(G)$ is equal to the minimum number of components among graphs not in
  $\mcn$ that correspond to elements of rank $m$ and that have the same number
  of vertices as $G$.
\end{proof}

\begin{lem}[Lemma 2.5 \cite{thatte-esp}] \label{lem-k12+2k2} The graph
  $K_{1,2}+2K_2$ is $Q$-reconstructible, and all elements of
  $\esp(K_{1,2}+2K_2)$ are uniquely labelled.
\end{lem}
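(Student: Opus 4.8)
The plan is to make the finite weighted poset $\esp(K_{1,2}+2K_2)$ completely explicit, and then read off both assertions from its structure. First I would list the elements by edge-rank: there is a single element of rank $1$, namely $K_2$; the rank-$2$ elements are $P_3$ and $2K_2$; the rank-$3$ elements are $K_{1,2}+K_2$ and $3K_2$; and the unique maximal element, of rank $4$, is $K_{1,2}+2K_2$ itself. Since the rank function is the number of edges, covers are exactly the rank-difference-one relations, so a direct check of which $H$ embed as edge-subgraphs gives the Hasse diagram: $K_{1,2}+2K_2$ covers both rank-$3$ elements; $K_{1,2}+K_2$ covers both $P_3$ and $2K_2$; $3K_2$ covers only $2K_2$; and each of $P_3, 2K_2$ covers $K_2$. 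Alongside this I would record the weights $q$, of which the only decisive ones are that $q(P_3, K_{1,2}+K_2)=1$ while $q(P_3, 3K_2)=0$.

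For the first assertion ($Q$-reconstructibility) I would invoke Lemma~\ref{lem-7edges}: the graph $K_{1,2}+2K_2$ has only $4$ edges, and it lies in none of $\mcf_0,\dots,\mcf_4$ (it is not one of the finitely many explicitly listed graphs, it is neither a star nor a matching, and written as $rK_3+sK_{1,3}+F$ it would need $r=s=0$, which is excluded from $\mcf_4$ by the condition $r\neq s$), so $K_{1,2}+2K_2 \notin \mcn$ and the graph is $Q$-reconstructible. Consequently every legitimate labelling $\ell$ of $\esp(K_{1,2}+2K_2)$ is a weight-preserving isomorphism onto the fixed concrete poset $Q(K_{1,2}+2K_2)$.

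It then remains to prove that each element is uniquely labelled, for which the key step is to show that $Q(K_{1,2}+2K_2)$ is rigid, i.e.\ its only weight-preserving automorphism is the identity. Any such automorphism preserves rank (the poset is graded with a unique minimum), so it fixes the rank-$1$ and rank-$4$ elements and can only permute elements within ranks $2$ and $3$. But the two rank-$2$ elements are separated by the number of rank-$3$ elements lying above them: $2K_2$ lies below both $K_{1,2}+K_2$ and $3K_2$, whereas $P_3$ lies below $K_{1,2}+K_2$ only. Dually, the two rank-$3$ elements are separated by their down-degrees to rank $2$: $K_{1,2}+K_2$ covers two rank-$2$ elements while $3K_2$ covers one. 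Hence no automorphism can transpose either pair, so it is the identity. Combining rigidity with $Q$-reconstructibility, any two legitimate labellings differ by an automorphism of $Q(K_{1,2}+2K_2)$ and therefore coincide; thus $\ell(x)$ is the same graph for every legitimate labelling, i.e.\ every element is uniquely labelled.

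The part I expect to require the most care is keeping the two claims logically separate: unique labelling of the maximal element is equivalent to $Q$-reconstructibility (which is why Lemma~\ref{lem-7edges} is genuinely needed and cannot be bypassed for that element), while unique labelling of the lower elements is precisely the rigidity statement. A fully self-contained alternative, avoiding Lemma~\ref{lem-7edges}, would instead determine the labels bottom-up from the abstract poset; there the only delicate point is distinguishing $P_3$ from $2K_2$ at rank $2$ and $K_{1,2}+K_2$ from $3K_2$ at rank $3$ using only abstract data, which is handled by the same up-degree and down-degree invariants together with a short finite check that no $4$-edge graph other than $K_{1,2}+2K_2$ realises this weighted poset.
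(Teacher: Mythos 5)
Your proof cannot be compared against ``the paper's own proof'' in the usual sense, because the paper does not prove Lemma~\ref{lem-k12+2k2} at all: it is imported verbatim from \cite{thatte-esp} (Lemma 2.5 there) and used as a black box. Judged on its own terms, your argument is correct: the poset you compute is the right one ($K_2$; then $P_3$ and $2K_2$; then $K_{1,2}+K_2$ and $3K_2$; then the maximum, with exactly the cover relations you list), the rigidity argument is valid (automorphisms of a graded poset with unique minimum preserve rank, the two rank-$2$ elements have distinct up-degrees, and the two rank-$3$ elements have distinct down-degrees), and the inference that $Q$-reconstructibility plus rigidity forces unique labelling of every element is exactly right, since any two legitimate labellings then differ by an automorphism of the concrete poset $Q(K_{1,2}+2K_2)$. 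The one caveat concerns your appeal to Lemma~\ref{lem-7edges}: that result is Proposition 2.9 of the \emph{same} source paper \cite{thatte-esp}, stated there \emph{after} Lemma 2.5, and it subsumes the $Q$-reconstructibility half of what you are proving (a $4$-edge graph not in $\mcn$); so as a reconstruction of the source's own development your main route would be circular, and only your sketched self-contained alternative --- identifying the rank-$2$ and rank-$3$ elements by up- and down-degrees and then carrying out the finite check that, among the finitely many $4$-edge graphs without isolated vertices, only $K_{1,2}+2K_2$ realises this weighted poset --- stands independently. Within the present paper, where Lemma~\ref{lem-7edges} is an independent citation, your proof is acceptable as written, and it buys something the citation-only treatment does not: an actual argument for the unique-labelling half, which does not follow from Lemma~\ref{lem-7edges} alone.
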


\begin{lem} \label{lem-t4-k12+2k2} If $G\not \in \mcn$ and contains both
  $K_{1,2}+2K_2$ and $T_4$ as subgraphs, then $v(x)$ and $k(x)$ are
  reconstructible for all $x$ in $\esp(G)$.
\end{lem}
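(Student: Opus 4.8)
The plan is to pin down the pair $(v(x),k(x))$ for every slot $x\in\esp(G)$, since this is exactly the data Lemma~\ref{lem-omega} consumes. The starting observation is that the down-set $\{y\in\esp(G)\mid y\leq_e x\}$, with its weights, is precisely $\esp(x)$: the edge-subgraphs of $x$ are exactly the edge-subgraphs of $G$ lying below $x$, and the counts $q(\cdot,\cdot)$ among them are intrinsic to those graphs. Hence whenever the graph represented by $x$ is not in $\mcn$, Lemma~\ref{lem-vk} applied to this down-set already yields $(v(x),k(x))$. Membership of $x$ in $\mcn$ is itself detectable, because the edge-subgraph posets of the $\mcn$-graphs are explicitly parametrised, and by Lemmas~\ref{lem-7edges}, \ref{lem1.0}, \ref{lem-k1m} and \ref{lem-mk2} any graph sharing one of these posets but lying outside $\mcn$ is $Q$-reconstructible. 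So the entire problem reduces to the slots $x$ whose down-set matches one of the ambiguous families $\mcf_0,\dots,\mcf_4$.

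For these I would first harvest the two hypotheses. Since $G$ contains $K_{1,2}+2K_2$, which is $Q$-reconstructible, there is a unique slot $w\in\esp(G)$ with $\esp(w)\cong\esp(K_{1,2}+2K_2)$, and by Lemma~\ref{lem-k12+2k2} every element of its down-set is uniquely labelled. Because each isomorphism type occupies a single slot of $\esp(G)$, this pins, globally, the slots $3K_2$, $K_{1,2}+K_2$, $2K_2$, $K_{1,2}$ and $K_2$; in particular it singles out $3K_2$ inside $\mcf_0=\{3K_2,K_3,K_{1,3}\}$ (note that $w$ is triangle-free of maximum degree $2$, so neither $K_3$ nor $K_{1,3}$ lies below it), and it resolves the $m=2$ case of $\mcf_3$ and the three-edge confusion in $\mcf_1$. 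The hypothesis that $G$ contains $T_4$ then supplies a triangle-free edge-subgraph that lies above $K_{1,3}$ and contains no three disjoint edges, so the only $\mcf_0$-class slot below it is $K_{1,3}$; as explained below, this is what will separate $K_{1,3}$ from $K_3$.

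With these anchors in place I would propagate $(v,k)$ through the cover relations of $\esp(G)$. For a cover $x\lessdot y$ the extra edge either starts a new component, attaches a pendant vertex, closes a chord inside a component, or bridges two components, so $(v(y)-v(x),\,k(y)-k(x))\in\{(2,1),(1,0),(0,0),(0,-1)\}$. Knowing $(v,k)$ at the top element $G$ (Lemma~\ref{lem-vk}), at every non-$\mcn$ slot, and at the anchored small slots, these local constraints force $(v,k)$ at an ambiguous slot that is sandwiched between resolved neighbours. For the infinite families this leaves two residual distinctions: star versus matching in $\mcf_3$, handled by passing to a cover of the form $K_{1,m}^{+e}$ or $(mK_2)^{+e}$ and invoking Lemmas~\ref{lem-k1m} and \ref{lem-mk2} (such a cover exists because $G$ is neither a star nor a matching), and the $K_3\leftrightarrow K_{1,3}$ component-swap in $\mcf_4$, which only affects $v$ and reduces to distinguishing $K_3$ from $K_{1,3}$ per component.

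The main obstacle is therefore exactly this last distinction: telling a $K_3$-slot from a $K_{1,3}$-slot when the two have identical down-sets. I would resolve it through the up-sets, using that every cover of a $K_3$-slot still contains a triangle, whereas $K_{1,3}$ admits the triangle-free covers $T_4$, $K_{1,4}$ and $K_{1,3}+K_2$; the presence of $T_4$ guarantees a triangle-free witness above the $K_{1,3}$-slot, and triangle-freeness is detectable once $3K_2$ has been pinned, because a triangle would force an $\mcf_0$-class slot distinct from the identified $3K_2$. Making this ``detectable triangle-freeness'' argument uniform across all levels of the poset, and checking by the explicit case analysis of $\mcf_2$ and of the component structure in $\mcf_4$ that no legitimate relabelling can evade it, is where the real work lies; once it is done, every $(v(x),k(x))$ is determined.
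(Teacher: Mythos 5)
Your first two paragraphs track the paper's proof closely: non-$\mcn$ slots are handled by Lemma~\ref{lem-vk}, and the $K_{1,2}+2K_2$ anchor (Lemma~\ref{lem-k12+2k2}) pins $K_2$, $K_{1,2}$, $2K_2$, $3K_2$ and $K_{1,2}+K_2$, which settles $P_4$ versus $K_{1,2}+K_2$ exactly as the paper does; your handling of $\mcf_3$ via a cover in $\{K_{1,m}^{+e}\}$ or $\{(mK_2)^{+e}\}$ and Lemmas~\ref{lem-k1m}, \ref{lem-mk2} is a legitimate alternative to the paper's one-line test (a star lies above the distinguished $K_{1,3}$, a matching does not). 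The genuine gap is in the step you yourself call the main obstacle: separating the $K_3$-slot from the $K_{1,3}$-slot. Your certificate for triangle-freeness of a cover $y$ --- ``a triangle would force an $\mcf_0$-class slot distinct from the pinned $3K_2$ below $y$'' --- is circular: every cover of the $K_{1,3}$-slot (in particular $T_4$, $K_{1,4}$, $K_{1,3}+K_2$) contains $K_{1,3}$ itself, which \emph{is} an $\mcf_0$-class slot distinct from $3K_2$, so your test certifies no cover of the $K_{1,3}$-slot and the criterion never fires. Under the charitable reading (``no $\mcf_0$-class slot below $y$ other than $x$ itself and $3K_2$'') it becomes unsound instead: whenever $G$ contains $K_3+K_2$, the $K_3$-slot acquires the cover $K_3+K_2$, whose only $\mcf_0$-class subgraph is $K_3$, so the $K_3$-slot is also ``certified triangle-free'' and misclassified. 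Worse, $K_3+K_2$ and $K_{1,3}+K_2$ are themselves an ambiguous pair in $\mcf_4$ with identical down-sets and identical weights to every pinned slot of rank at most $3$, so no test of this shape applied to covers can separate the two. Your cover-propagation paragraph cannot rescue it either: $(v,k)$ equals $(3,1)$ for $K_3$ and $(4,1)$ for $K_{1,3}$, a difference of $(1,0)$, which is one of your legal single-cover increments, so sandwiching between resolved neighbours leaves both options consistent (e.g.\ below a resolved paw with $(v,k)=(4,1)$).

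The paper closes this hole in a different order, and you in fact possess the needed ingredient in your second paragraph without deploying it: first identify the $T_4$-\emph{slot}, then read $K_{1,3}$ off as the unique $\mcf_0$-class slot below it, and let $K_3$ be whatever $\mcf_0$-class slot remains. Identifying the $T_4$-slot is possible because the only graphs whose edge-subgraph poset is isomorphic to $Q(T_4)$ are $T_4$ and $P_4+K_2$, and the already-pinned elements separate them: $P_4+K_2$ lies above the pinned $3K_2$ (weight $1$) and above the pinned $K_{1,2}+K_2$ with weight $2$, whereas $T_4$ does not lie above $3K_2$ at all and has weight $1$ to $K_{1,2}+K_2$; the hypothesis $T_4\leq_e G$ guarantees such a slot exists. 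Once $K_3$, $K_{1,3}$ and $P_4$ are distinguished, the pairs in $\mcf_2$ and the slots of type $rK_3+sK_{1,3}+F$ versus $sK_3+rK_{1,3}+F$ in $\mcf_4$ are resolved simply by comparing weights to these distinguished slots --- no uniform ``detectable triangle-freeness'' machinery is required. As written, your argument leaves precisely this central case open (as you concede), so the proof is incomplete.
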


\begin{proof} Let $x$ be an element of $\esp(G)$. If $x\not\in \mcn$, then
  $v(x)$ and $k(x)$ are known by Lemma~\ref{lem-vk}. Hence assume that
  $x\in \mcn$.
  
  We claim that all elements of $\esp(G)$ of rank $3$ are uniquely
  labelled. Indeed, all elements of $\esp(K_{1,2}+2K_2)$ are uniquely labelled
  (by Lemma \ref{lem-k12+2k2}), $\esp(T_4)\cong \esp(P_4+K_2)$, and there is no
  other graph $H$ such that $Q(H)\cong Q(T_4)$, we distinguish $T_4$. Now
  $K_{1,3}$ is distinguished since $T_4$ contains $K_{1,3}$, but does not
  contain $K_3$ or $3K_2$. Furthermore, $3K_2$ is distinguished. Thus $K_3$ is
  distinguished also. Other subgraphs with 3 edges are distinguished since they
  are edge reconstructible, and graphs with 2 edges are distinguished. Thus, the
  graphs in $\mcf_0$, $P_4+K_2$ and $T_4$ are distinguished.
  
  Graphs $P_4$ and $K_{1,2}+K_2$ are distinguished since $K_{1,2}+2K_2$ contains
  $K_{1,2}+K_2$, but does not contain $P_4$.

  Pairs of graphs in $\mcf_2$ are distinguished since $P_4$ and $K_{1,3}$ are
  distinguished.

  Graphs $K_{1,m}$ and $mK_2$ are distinguished since $K_{1,m}$ contains
  $K_{1,3}$ as a subgraph, while $mK_2$ does not.

  Graphs $rK_3+sK_{1,3} + F$ and $sK_3+rK_{1,3} + F$, where $r \neq s$, are distinguished since
  $K_{1,3}$ and $K_3$ are distinguished.
\end{proof}

\begin{lem} \label{lem-atleast4} If $G \not \in \mcn$, $e(G) \geq 5$, and
  $\Delta(G)\geq 4$, then $v(x)$ and $k(x)$ are reconstructible for all $x$ in
  $\esp(G)$.
\end{lem}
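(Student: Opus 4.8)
The plan is to reduce to elements labelled by members of $\mcn$ and then to pin those labels down using the large star that $\Delta(G)\geq 4$ forces into $G$. For any $x\in\esp(G)$ and any legitimate labelling $\ell$, the principal ideal $\{y\in\esp(G)\mid y\leq_e x\}$ is isomorphic, as a weighted poset, to $\esp(\ell(x))$, since every edge-subgraph of $\ell(x)$ is an edge-subgraph of $G$. Hence if $x\notin\mcn$ then Lemma~\ref{lem-vk}, applied to this ideal, already reconstructs $v(x)$ and $k(x)$, and it remains only to treat the $x$ with $\ell(x)\in\mcn$; for these it suffices to determine $v(x)$ and $k(x)$, for which it is enough to force the label $\ell(x)$. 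The engine will be a \emph{transfer principle}: if $G$ contains, as an edge-subgraph, a $Q$-reconstructible graph $H$ in which a graph $F$ is distinguished, then $F$ is distinguished in $\esp(G)$. Indeed, the isomorphism class of $H$ is a node $x_H\in\esp(G)$ whose ideal is isomorphic to $\esp(H)$; as $H$ is $Q$-reconstructible every legitimate labelling sends $x_H\mapsto H$, and its restriction to that ideal is a legitimate labelling of $\esp(H)$, so it carries the distinguished node of $F$ to $F$.

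I would then exhibit $K_{1,3}$ as distinguished. Write $d\coloneqq\Delta(G)\geq 4$ and let $w$ be a vertex of degree $d$, so that $K_{1,d}\leq_e G$. Since $G\notin\mcf_3$ we have $G\neq K_{1,d}$, and because $w$ already carries all its $d$ edges in this star, some edge $e'$ of $G$ avoids $w$. Let $H$ be the edge-subgraph of $G$ whose edges are the $d$ star edges together with $e'$; then $H$ lies in $\{K_{1,d}^{+e}\}\setminus\{K_{1,d+1}\}$, since its maximum degree is still $d$ and so it is not $K_{1,d+1}$. By Lemma~\ref{lem-k1m}, $H$ is $Q$-reconstructible and $K_{1,3}$ is distinguished in $\esp(H)$, whence the transfer principle makes $K_{1,3}$ distinguished in $\esp(G)$.

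Distinguishing $K_{1,3}$ cascades to everything else, exactly as in the proof of Lemma~\ref{lem-t4-k12+2k2}. The unique rank-$2$ node below the $K_{1,3}$-node must be $K_{1,2}$, so $K_{1,2}$ is distinguished and the remaining rank-$2$ node is forced to be $2K_2$. The three kinds of rank-$3$ node are now separated by which rank-$2$ nodes lie below them: $3K_2$ sits only over $2K_2$, the pair $K_3,K_{1,3}$ only over $K_{1,2}$, and $P_4,K_{1,2}+K_2$ over both (with $q(K_{1,2},\cdot)$ telling the last two apart), so $K_3,3K_2,P_4$ and $K_{1,2}+K_2$ all become distinguished. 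With $K_3,K_{1,3},P_4$ distinguished, the weights $q(K_3,x),q(K_{1,3},x),q(P_4,x)$ are readable from $\esp(G)$ for every remaining node $x$, and these separate each confusable pair of $\mcn$: $K_{1,m}$ from $mK_2$ via $q(K_{1,3},\cdot)$, the pairs of $\mcf_1$ and $\mcf_2$ via $q(K_{1,3},\cdot)$ and $q(P_4,\cdot)$, and $rK_3+sK_{1,3}+F$ from its $r\!\leftrightarrow\!s$ swap via $q(K_3,\cdot)$, whose values for the two candidates differ by $r-s\neq 0$ because $F$ contributes equally to both. Thus every $\mcn$-labelled node has a forced label, and hence known $v$ and $k$.

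The main obstacle is the bookkeeping of the last step, namely confirming that this handful of distinguished small graphs genuinely separates \emph{every} confusable pair in $\mcn$, in particular the eight graphs of $\mcf_2$ and the whole infinite family $\mcf_4$, where the partner swaps the numbers of $K_3$- and $K_{1,3}$-components over an arbitrary $F\in\mathbb{N}^{\mcx}$. I expect this to succeed because the swap moves $q(K_3,x)$ and $q(K_{1,3},x)$ while leaving the contribution of $F$ fixed, but verifying it uniformly and checking the $\mcf_2$ cases is where care is required; the one genuinely new input beyond the template of Lemma~\ref{lem-t4-k12+2k2} is obtaining $K_{1,3}$ as distinguished, via the star-plus-edge subgraph and Lemma~\ref{lem-k1m}.
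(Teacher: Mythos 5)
Your proposal is correct, and its skeleton matches the paper's: reduce to nodes labelled by graphs in $\mcn$ via Lemma~\ref{lem-vk}, then use $\Delta(G)\geq 4$ together with $G\notin\mcf_3$ to embed a member of $\{K_{1,m}^{+e}\}\setminus\{K_{1,m+1}\}$, $m\geq 4$, in $G$, so that Lemma~\ref{lem-k1m} (through the transfer principle you state explicitly, and which the paper uses tacitly) makes $K_{1,3}$ distinguished; from there $K_{1,3}$ settles $\mcf_3$, $\mcf_4$, and most of $\mcf_1\cup\mcf_2$. Where you genuinely depart from the paper is the residual ambiguity $\{P_4,\,K_{1,2}+K_2\}$ (and consequently $\{C_4,\,2K_{1,2}\}$): the paper resolves it by a three-way case analysis according to whether $G$ contains $K_{1,4}+K_2$, $S_5$, or $T_5$, in each case comparing containment counts of $P_4$ and $K_{1,2}+K_2$ inside a distinguished reference graph ($K_{1,4}+K_2$, $S_4$, or $T_4$). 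You instead note that the unique rank-$2$ node below the distinguished $K_{1,3}$-node is forced to be $K_{1,2}$, hence $K_{1,2}$ and then $2K_2$ are distinguished, and the weight $q(K_{1,2},\cdot)$, which equals $2$ on $P_4$ and $1$ on $K_{1,2}+K_2$, splits the pair. This is sound, because weights between a distinguished node and an arbitrary node are part of the abstract poset data, and it removes the paper's case analysis entirely; it is arguably the cleaner argument. Two small points of care: your construction of the star-plus-edge subgraph correctly uses that every edge of $G$ not in the star at a maximum-degree vertex $w$ avoids $w$ (all edges at $w$ lie in the star), and your final bookkeeping for the $\mcf_2$ pairs is asserted at the same level of detail as the paper's own claim that these pairs ``are distinguished since $P_4$ and $K_{1,3}$ are distinguished,'' so nothing is lost there.
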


\begin{proof} Let $x \in \esp(G)$. If $x\not\in \mcn$, then $v(x)$ and $k(x)$
  are known by Lemma~\ref{lem-vk}. Hence assume that $x\in \mcn$. By
  Lemma~\ref{lem-k1m}, $K_{1,3}$ is distinguished. This allows distinguishing
  subgraphs isomorphic to $P_4+K_2, T_4, B_1, C_4+K_2, B_2, P_6, B_3, B_4$ and
  all subgraphs in $\mcf_3\cup\mcf_4$. Hence for such elements, $v(x)$ and
  $k(x)$ are known. The only elements in $\mcn$ that are not yet distinguished
  are $P_4, K_{1,2}+K_2, C_4, 2K_{1,2}$.

  We have $\Delta(G)\geq 4$ and $G$ itself is not $K_{1,\Delta(G)}$, hence $G$
  contains one of the graphs in $\{K_{1,4}^{+e}\}\setminus \{K_{1,5}\}$ (i.e.,
  one of $K_{1,4}+K_2, S_5, T_5$) as a subgraph. We use such a subgraph to
  distinguish $P_4$ and $K_{1,2}+K_2$.

  If $G$ contains $K_{1,4}+K_2$, then $K_{1,4}+K_2$ is distinguished (by
  Lemma~\ref{lem-k1m}). Now $K_{1,2}+K_2$ and $P_4$ are distinguished since
  $K_{1,4}+K_2$ does not contain $P_4$ but contains $K_{1,2}+K_2$.
  
  If $G$ contains $S_5$, then it also contains $S_4$, and $S_4$ is
  $Q$-reconstructible. Hence $S_4$ is distinguished. Now $K_{1,2}+K_2$ and $P_4$
  are distinguished since $S_4$ does not contain $K_{1,2}+K_2$ but contains 2
  subgraphs isomorphic to $P_4$.

  If $G$ does not contain $S_4$ but contains $T_5$, then $G$ contains $T_4$,
  which is distinguished (as noted above). Now $P_4$ and $K_{1,2}+K_2$ are
  distinguished since $T_4$ contains one subgraph isomorphic to $K_{1,2}+K_2$
  and two subgraphs isomorphic to $P_4$.

  Once $P_4$ is distinguished, we also distinguish $C_4$ and $2K_{1,2}$ since
  only $C_4$ contains $P_4$ as a subgraph.

  Now $v(x)$ and $k(x)$ are known for all $x$ in $\esp(G)$.
\end{proof}

\begin{lem} \label{lem-K_1,2+2K_2-notT_4} If $G\notin \mcn$, $\Delta(G) \leq 3$,
  and $G$ contains $K_{1,2}+2K_2$ but does not contain $T_4$, then $G$ is
  $Q$-reconstructible.
\end{lem}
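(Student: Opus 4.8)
The plan is to extract the rigid structure forced by the two local hypotheses $\Delta(G)\le 3$ and the absence of $T_4$ before addressing reconstruction at all. First I would prove that every component of $G$ containing a vertex of degree $3$ has at most $4$ vertices. Indeed, suppose $v$ has degree $3$ with neighbours $a,b,c$. If the component of $v$ had a further vertex, then, since $v$ has no neighbour outside $\{a,b,c\}$, some vertex $x\notin\{v,a,b,c\}$ would be adjacent to one of $a,b,c$, say to $c$; but then the edges $va,vb,vc,cx$ form a copy of $T_4$ on the five distinct vertices $v,a,b,c,x$, contradicting the hypothesis. Since $\Delta(G)\le 3$, a component with no vertex of degree $3$ is a path or a cycle, while a component with a vertex of degree $3$ has at most $4$ vertices and maximum degree $3$, hence is one of $K_{1,3},S_4,K_4\setminus e,K_4$. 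Thus every component of $G$ lies in $\{P_n:n\ge 2\}\cup\{C_n:n\ge 3\}\cup\{K_{1,3},S_4,K_4\setminus e,K_4\}$.

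Next I would separate the triangle components $C_3=K_3$ and the claw components $K_{1,3}$ from the rest, writing $G=rK_3+sK_{1,3}+F$, where $r$ and $s$ are the numbers of triangles and claws and $F$ collects the remaining components. The admissible remaining types are exactly $\{P_n:n\ge 2\}\cup\{C_n:n\ge 4\}\cup\{S_4,K_4\setminus e,K_4\}=\mcx$, so $F\in\mathbb{N}^{\mcx}$. Because $G$ contains $K_{1,2}+2K_2$ we have $e(G)\ge 4$, so $G\notin\{K_3,K_{1,3}\}$; combined with $G\notin\mcn\supseteq\mcf_4$ this forces $r=s$, since otherwise $G$ would be a member of $\mcf_4$ by its very definition.

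It remains to reconstruct $G$ from $\esp(G)$. Using the copy of $K_{1,2}+2K_2$ and Lemma~\ref{lem-k12+2k2} I would uniquely label the low-rank elements, and, exactly as in Lemma~\ref{lem-t4-k12+2k2}, distinguish $K_{1,2}+K_2$ from $P_4$; together with Lemma~\ref{lem-vk} this lets me recognise $F$ and the multiplicity of each admissible component type. The single obstruction is that $K_3$ and $K_{1,3}$ have isomorphic edge-subgraph posets and so cannot be told apart, which means $\esp(G)$ pins down only the unordered pair $\{r,s\}$. If $r\ne s$ this pair would correspond to the two non-isomorphic graphs $rK_3+sK_{1,3}+F$ and $sK_3+rK_{1,3}+F$ with identical posets; but we have shown $r=s$, so the pair $\{r,s\}=\{r,r\}$ recovers $r$ and $s$ individually, and the isomorphism type of $G$ is determined.

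The hard part is the reconstruction step, specifically the assertion that $\esp(G)$ determines $F$ and all component multiplicities apart from the $K_3/K_{1,3}$ split; that is, that every graph $H$ with $\esp(H)\cong\esp(G)$ shares the component structure of $G$ up to the $K_3\leftrightarrow K_{1,3}$ swap, rather than this being merely verified for an $H$ already assumed to have the rigid form. The difficulty is that edge-subgraph counts coming from different components combine inside $\esp(G)$, so one must disentangle each admissible component type's contribution and rule out that a graph using some component outside $\mcx\cup\{K_3,K_{1,3}\}$ yields the same poset. Once this bookkeeping is in place, the equality $r=s$ together with the swap symmetry accounts for the only genuine non-uniqueness, and $Q$-reconstructibility follows.
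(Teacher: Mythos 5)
Your structural analysis is correct and coincides with the paper's: the absence of $T_4$ together with $\Delta(G)\le 3$ forces every component containing a degree-$3$ vertex to have at most $4$ vertices (your five-vertex argument producing a copy of $T_4$ is exactly right), so $G = rK_3 + sK_{1,3} + F$ with $F\in\mathbb{N}^{\mcx}$, and $G\notin\mcf_4$ forces $r=s$. But what you label ``the hard part'' --- and explicitly leave undone --- is precisely the bulk of the paper's proof, so your proposal has a genuine gap rather than a complete argument. Two things are missing. First, \emph{recognition}: for $Q$-reconstructibility you must show that every graph $H$ with $\esp(H)\cong\esp(G)$ also satisfies the hypotheses of the lemma, so that your structure theorem applies to $H$ and not only to $G$; you name this worry but do not resolve it. The paper settles it at the outset: one may assume $e(G)>7$ (Lemma~\ref{lem-7edges}); $\Delta(G)\ge 4$ is recognisable (Lemma~\ref{lem-k1m}); containment of $K_{1,2}+2K_2$ is recognisable because that graph is $Q$-reconstructible (Lemma~\ref{lem-k12+2k2}); and containment of $T_4$ is recognisable because any $7$-edge subgraph containing $T_4$ is not in $\mcn$ and hence is $Q$-reconstructible. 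After this, every $Q$-reconstruction of $G$ lies in the same class.

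Second, the \emph{component bookkeeping}. Lemma~\ref{lem-vk} only yields $v(x)$ and $k(x)$ for elements not in $\mcn$; it does not deliver the multiset of components of $G$, which is what actually determines $G$ here. The paper extracts that multiset by induction on $e(G)$: it distinguishes $P_4$ from $K_{1,2}+K_2$ (as you do), then $C_4,P_5,C_5,P_6,C_6$, and inductively all paths and cycles of each smaller rank, using edge-reconstructibility together with already-distinguished edge-decks; it treats the case where $G$ itself is a path or cycle separately; and otherwise it solves a triangular system expressing component counts in terms of $Q$-reconstructible subgraph counts, in the order $k(K_4,G)$, $k(K_4\setminus e,G)$, $k(S_4,G)$, $k(K_3,G)=k(K_{1,3},G)$, then $k(C_i,G)$ for $i\ge 5$ (such cycles must be components), and finally $k(P_i,G)$ from longest to shortest. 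The identity $q(K_3,H)=q(K_{1,3},H)$ for $H\in\{S_4,K_4\setminus e,K_4\}$, combined with $r=s$, is what makes the unlabellable $K_3$/$K_{1,3}$ pair harmless in these equations --- this is the rigorous form of your ``swap symmetry'' remark. Without the recognition step and this counting scheme, your proposal establishes what $G$ looks like but not that $\esp(G)$ determines it.
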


\begin{proof} Let $\mci$ be the class of graphs satisfying the conditions in the
  statement of the lemma.

  First we show that we can recognise if $G$ is in $\mci$. Graphs not in $\mcn$
  with at most 7 edges are $Q$-reconstructible (Lemma~\ref{lem-7edges}), so we
  assume that $e(G) > 7$.  Since $G$ is not $K_{1,m}$ for any $m$, we can assume
  that $\Delta(G) < e(G)$. By Lemma~\ref{lem-k1m}, we can recognise if
  $\Delta(G) \geq 4$.  Since $K_{1,2}+2K_2$ is $Q$-reconstructible, we can
  recognise if $G$ contains $K_{1,2}+2K_2$ as a subgraph.  We can recognise if
  $G$ contains $T_4$ as a subgraph as follows: if $G$ contains $T_4$, then $G$
  has a 7-edge subgraph $F$ that contains $T_4$, and thus cannot be in
  $\mcn$. Hence $F$ is $Q$-reconstructible (by Lemma~\ref{lem-7edges}). Thus we
  know that $G$ contains $T_4$ as a subgraph. Hence we now assume that $G$ and
  all its $Q$-reconstructions are in $\mci$.

  Next we make some observations about the structure of any graph $G$ in class
  $\mci$. Graphs $S_4, K_4\setminus e, K_4$ are in $\mci$, and since $G$ does
  not contain $T_4$, these graphs can only occur as subgraphs of 4-vertex
  components. Also, we can verify that any subgraph isomorphic to $K_3$ or
  $K_{1,3}$ is either a component of $G$ or a subgraph of a component isomorphic
  to $S_4$, $K_4\setminus e$ or $K_4$. All other components of $G$ are paths or
  cycles. Hence $G$ is of the form $rK_3 + sK_{1,3} + F$, where components of
  $F$ are in $\mcx$. But $G$ is not in $\mcn$, hence $r = s$. Since
  $q(K_{1,3}, H) = q(K_3,H)$ for all $H\in \{S_4, K_4\setminus e, K_4\}$, we
  have $k(K_3,G) = k(K_{1,3},G)$ and $q(K_3,G) = q(K_{1,3},G)$ for all graphs in
  $\mci$.

  Graphs $S_4, K_4\setminus e, K_4$ are all $Q$-reconstructible, hence
  $q(S_4,G)$, $q(K_4\setminus e, G)$ and $q(K_4,G)$ are known. Therefore,
  $k(S_4,G)$, $k(K_4\setminus e, G)$, $k(K_4,G)$, $k(K_3,G)$ and $k(K_{1,3},G)$
  are all reconstructible by the following sequence of calculations:
  \begin{align*}
    k(K_4,G) &= q(K_4,G)\\
    k(K_4\setminus e, G) &= q(K_4\setminus e, G) - q(K_4\setminus e, K_4)k(K_4,G)\\
    k(S_4,G) &= q(S_4,G) - q(S_4, K_4\setminus e)k(K_4\setminus e, G) - q(S_4, K_4)k(K_4, G),\\
    k(K_3,G) = k(K_{1,3},G) &= q(K_3,G) - k(S_4,G) - 2k(K_4\setminus e,G) - 4k(K_4,G).
  \end{align*}
  
  The graph $K_{1,2}+2K_2$ is contained in $G$, and by Lemma \ref{lem-k12+2k2},
  the graphs $K_2$, $P_3$, $2K_2$, $K_{1,2}+K_2$, $3K_2$, and $K_{1,2}+2K_2$ are
  distinguished. Now $P_4$ is distinguished because it is edge reconstructible
  and all edge-deleted subgraphs of $P_4$ are distinguished. The argument
  extends to $C_4$, $P_5$, $C_5$, $P_6$ and $C_6$ - they are all
  distinguished. (Of these graphs, $C_5$, $C_6$ and $P_5$ are
  $Q$-reconstructible.) Thus we know $q(P_i,G), 2 \leq i \leq 6 $ and
  $q(C_i,G), 4 \leq i \leq 6$.

  Graphs with at most 7 edges that are not in $\mcn$ are $Q$-reconstructible,
  hence we prove the lemma by induction on $e(G)$, with $e(g) = 7$ as the base
  case. Suppose that the result is true for all $G\in\mci$ such that
  $7 \leq e(G) \leq m$. Let $G$ be a graph with $e(G) = m+1$, and suppose that
  we are given $\esp(G)$.

  Paths and cycles with $i$ edges, for $7 \leq i \leq m$ are distinguished by
  induction hypothesis. Hence if $G$ itself is a cycle or path, then its
  edge-deck is known, and by edge reconstructibility of cycles and paths, $G$ is
  $Q$-reconstructible. Hence we assume that $G$ is not a path or cycle.

  Cycles of length 5 or more can only be components (since $G$ does not contain
  $T_4$). Now, we calculate $k(P_i,G), 2 \leq i \leq m$ and
  $k(C_i,G), 4 \leq i \leq m$ by solving the following equations (in that
  order):
  \begin{align*}
    k(C_4,G) &= q(C_4,G) - q(C_4,K_4\setminus e)k(K_4\setminus e,
               G) - q(C_4,K_4)k(K_4, G), \\
    k(C_i,G) &= q(C_i,G) \text{ for } 5 \leq i \leq m, \\
    k(P_i,G) &= q(P_i,G) - \sum_{H} q(P_i,H)k(H,G) \text{ for } i = m, m-1, \ldots, 2,
  \end{align*} where the summation in the last equation is over $H \in
  \{K_{1,3}, K_3, S_4, K_4\setminus e, K_4, P_r, r > i, C_s, s \geq 4 \}$.

  Now all components of $G$ are known along with their multiplicities,
  completing the induction step and the proof.
\end{proof}

\begin{lem} \label{lem-T_4-not-K_1,2+2K_2} If $G\notin \mcn$, $\Delta(G)\leq 3$,
  and $G$ contains $T_4$ but does not contain $K_{1,2}+2K_2$, then $G$ is
  $Q$-reconstructible.
\end{lem}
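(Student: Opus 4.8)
The plan is to follow the template of Lemma~\ref{lem-K_1,2+2K_2-notT_4}, but here the structural conclusion is much sharper because forbidding $K_{1,2}+2K_2$ while forcing $T_4$ is very restrictive. Let $\mcj$ be the class of graphs satisfying the hypotheses. First I would check that membership in $\mcj$ is recognisable from $\esp(G)$: graphs not in $\mcn$ with at most $7$ edges are $Q$-reconstructible (Lemma~\ref{lem-7edges}), so I assume $e(G) > 7$; by Lemma~\ref{lem-k1m} I can recognise whether $\Delta(G) \geq 4$; and, exactly as in the proof of Lemma~\ref{lem-K_1,2+2K_2-notT_4}, I can recognise whether $G$ contains $T_4$ or $K_{1,2}+2K_2$ by exhibiting a $7$-edge subgraph carrying the pattern, which is then not in $\mcn$ and hence $Q$-reconstructible. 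Thus I may assume $G$ and all its $Q$-reconstructions lie in $\mcj$.

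The key structural step comes next. Since $T_4$ contains $K_{1,2}+K_2$ on its five vertices (indeed $T_4 \supseteq P_3 + K_2$), any edge of $G$ that is vertex-disjoint from a fixed copy of $T_4$ would complete a $K_{1,2}+2K_2$ in $G$, contrary to hypothesis. Hence every edge of $G$ is incident with $V(T_4)$; in particular $G - V(T_4)$ is edgeless, and, since $G$ has no isolated vertices, $G$ must be \emph{connected} (a second component carrying an edge would again produce $K_{1,2}+2K_2$). Therefore $G$ is a copy of $T_4$, possibly with extra edges among its five vertices, together with pendant vertices attached to it; with $\Delta(G)\leq 3$ this forces $e(G)\leq 15$, so $\mcj$ is a finite family of connected graphs. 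This is the analogue of the ``$G = rK_3+sK_{1,3}+F$'' structure theorem of the previous lemma, but now collapsed to a single connected ``core-plus-pendants'' shape.

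The reconstruction then amounts to distinguishing the few elements of $\esp(G)$ that lie in $\mcn$, and the main obstacle is exactly the pair $T_4$ and $P_4+K_2$, which have isomorphic abstract edge-subgraph posets, so that the containment argument of Lemma~\ref{lem-t4-k12+2k2} (which used $K_{1,2}+2K_2$) is no longer available. I would resolve this as in Lemmas~\ref{lem-t4-k12+2k2} and \ref{lem-atleast4} by reconstructing $v(x)$ and $k(x)$ for every $x\in\esp(G)$: these are known by Lemma~\ref{lem-vk} whenever $x\notin\mcn$, and the $\mcn$-elements are then separated by their parameters, e.g.\ $(v,k)=(5,1)$ for $T_4$ against $(6,2)$ for $P_4+K_2$, and $(4,1)$ for $C_4$ against $(6,2)$ for $2K_{1,2}$. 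The delicate point I expect to be genuinely hard is that $(v,k)$ alone does \emph{not} separate every confusable pair inside this class — $P_4+K_2$ and $2K_{1,2}$ share $(v,k)=(6,2)$, and both can occur as subgraphs of a graph in $\mcj$ — so I would need an additional degree-based invariant (or the connectivity of $G$ together with the reconstructible counts $q(P_i,G)$, $q(C_i,G)$ of the path and cycle subgraphs) to finish the separation. Once all $\mcn$-elements are distinguished, the subgraph counts $k(H,G)$ of the small pieces are computable, and $G$ is reconstructed by induction on $e(G)$ with base case $e(G)=7$ (Lemma~\ref{lem-7edges}); because $G$ is a single connected graph, the induction pins down its core-and-pendant structure directly rather than through component multiplicities. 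The crux throughout is carrying out the $(v,k)$-reconstruction and the $P_4+K_2$ versus $2K_{1,2}$ separation using only the presence of $T_4$ and the absence of $K_{1,2}+2K_2$, since the usual witnesses ($K_{1,2}+2K_2$ and high-degree stars) are both unavailable.
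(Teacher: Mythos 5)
Your opening moves coincide with the paper's: recognisability of the class from $\esp(G)$ (via Lemma~\ref{lem-7edges}, Lemma~\ref{lem-k1m}, and $Q$-reconstructibility of $K_{1,2}+2K_2$ and of $7$-edge subgraphs containing $T_4$), and the observation that a second nontrivial component would combine with $P_3+K_2\subseteq T_4$ to produce $K_{1,2}+2K_2$, so $G$ is connected. After that, however, your argument has a genuine gap, and you name it yourself: you never carry out the separation of the confusable $\mcn$-elements, deferring it to ``an additional degree-based invariant'' that is never exhibited. Worse, the route you sketch is circular: for an element $x\in\esp(G)$ that lies in $\mcn$, the parameters $(v(x),k(x))$ are exactly what is ambiguous --- Lemma~\ref{lem-vk} applies only to elements \emph{not} in $\mcn$, so saying that $T_4$ and $P_4+K_2$ ``are separated by their parameters $(5,1)$ versus $(6,2)$'' presupposes that you already know which graph $x$ is. (Also, $P_4+K_2$ versus $2K_{1,2}$ is not a confusable pair at all --- their abstract edge-subgraph posets differ; the genuine pairs are $\{T_4,P_4+K_2\}$ and $\{C_4,2K_{1,2}\}$.) The closing induction (``pins down its core-and-pendant structure directly'') is likewise only a gesture: unlike Lemma~\ref{lem-K_1,2+2K_2-notT_4}, where the induction step was an explicit computation of component multiplicities, here no induction step is given.

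The paper avoids all of this by pushing the structural analysis further than you do. Connectivity together with the absence of $K_{1,2}+2K_2$ forces $v(G)\in\{5,6\}$, not merely $e(G)\le 15$; since $\Delta(G)\le 3$ and $e(G)>7$, this leaves $v(G)=6$ and $e(G)\in\{8,9\}$, so the degree sequence of $G$ is $3,3,3,3,2,2$, or $3,3,3,3,3,1$, or $3,3,3,3,3,3$. A degree/counting argument then shows that no edge-deleted subgraph of $G$ lies in $\mcn$: any graph of $\mcn$ with $7$ or $8$ edges on at most $6$ vertices would have to come from $\mcf_4$ and hence contain a $K_3$ or $K_{1,3}$ component, which is impossible for such $G-e$. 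Consequently every rank-$(e(G)-1)$ element of $\esp(G)$ is $Q$-reconstructible (by Lemma~\ref{lem-7edges} when $e(G)=8$, and by the $8$-edge case when $e(G)=9$), which yields the \emph{edge deck} of $G$; the proof then finishes by citing edge reconstructibility of such small graphs (via vertex reconstructibility of small or disconnected graphs). In other words, the paper's endgame never needs to label the ambiguous $\mcn$-elements at all, which is precisely the step you could not supply. To repair your proof you would either need to establish the sharper bound $v(G)\le 6$ and follow this edge-deck route, or genuinely produce the missing separation argument --- and the unavailability of the usual witnesses ($K_{1,2}+2K_2$ and stars $K_{1,m}$, $m\ge 4$) is exactly why the paper detours through edge reconstruction instead.
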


\begin{proof} Graphs with at most 7 edges that are not in $\mcn$ are
  $Q$-reconstructible, hence we assume that $e(G)>7$.

  If a graph contains $T_4$, and has 2 or more non-trivial components, then it also contains
$K_{1,2}+2K_2$. Therefore we assume that $G$ is connected. The conditions on $G$ also imply that $v(G) \in \{5,6\}$. If $v(G) = 5, \Delta(G) \leq 3$, then $e(G) \leq 7$ and the result is true. Therefore, we can assume that $v(G) = 6$. Now $\Delta(G) \leq 3$ and $e(G) > 7$ imply that have $e(G) \in \{8,9\}$.
  
  For any graph $H$, if $4 \leq e(H) \leq 9$, then $v(H)\leq 10$ or $H$ is
  disconnected. In either case, $H$ is vertex reconstructible, and hence edge
  reconstructible (see \cite{hemminger.1969,kelly.1957,mckay.1977}). If
  $e(G)=8$, the degree sequence of $G$ is $3,3,3,3,2,2$ or $3,3,3,3,3,1$. Now a
  $7$-edge subgraph of $G$ cannot have a component isomorphic to $K_3$ or
  $K_{1,3}$ (since the degree sequence of $K_3$ and $K_{1,3}$ is $2,2,2$ and
  $3,1,1,1$, respectively), hence $G$ does not have a 7-edge subgraph in
  $\mcn$. Hence all edge-deleted subgraphs of $G$ are $Q$-reconstructible, and
  since $G$ is edge reconstructible, it is also $Q$-reconstructible.

  If $e(G)=9$, then the degree sequence of $G$ is $3,3,3,3,3,3$, by a similar
  argument as in the case $e(G) = 8$, we note that no edge deleted subgraph of
  $G$ is in $\mcn$, hence using the case $e(G) = 8$, we can construct the edge
  deck of $G$, and then reconstruct $G$ since $G$ is edge reconstructible.
\end{proof}

\begin{proof} \textit{of Theorem \ref{thmmain} (the 'if' part)} Let $G$ be a
  graph such that $G\notin \mcm$.

  If $G\notin \mcn$ and $e(G)\leq 7$, then $G$ is $Q$-reconstructible by Lemma
  \ref{lem-7edges}. Hence $\cpl(G)$ can be constructed from $\esp(G)$. If
  $G\notin \mcn$, and contains $K_{1,2}+2K_2$ or $T_4$, and $\Delta(G)\leq 3$,
  then $G$ is $Q$-reconstructible by Lemmas \ref{lem-K_1,2+2K_2-notT_4} and
  \ref{lem-T_4-not-K_1,2+2K_2}. Hence $\cpl(G)$ can be constructed from
  $\esp(G)$.  If $G$ contains neither $T_4$ nor $K_{1,2}+2K_2$ as a subgraph,
  then $e(G)\leq 7$.  If $G\in \mcf_3\cup \{P_4, K_{1,2}+K_2, P_4+K_2, T_4\}$,
  then $\cpl(G)$ can be constructed from $\esp(G)$ by Propositions \ref{p-f1f3}.
  In all other cases, by Lemmas~\ref{lem-t4-k12+2k2} and~\ref{lem-atleast4}, we
  can reconstruct $v(x)$ and $k(x)$ for all $x\in\esp(G)$.

  Now we apply Lemmas~\ref{lem-omega} to $g_i$ and $g_k$, with $g_k$ as the
  maximal element, to recognise if $g_i$ must be $\cpl(G)$. Then we again apply
  Lemma~\ref{lem-omega} for all $g_i,g_k$ that are marked as elements of
  $\cpl(G)$.

  Let $G$ be a graph such that $G\notin \mcn$. Given $\esp(G)$, we construct
  $\cpl(G)$. Now, by Theorem \ref{thm-cpl-p}, we construct $\overline{P}(G)$
  from $\cpl(G)$.
\end{proof}

The following corollary generalises the result that edge reconstruction
conjecture is weaker than the vertex reconstruction conjecture.

\begin{cor} If a graph $G$ not in $\mcn$ is $P$-reconstructible, then it is
  $Q$-reconstructible.
\end{cor}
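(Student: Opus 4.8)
The plan is to chain the two constructions that are already available for graphs outside $\mcn$ and then cash in $P$-reconstructibility. Fix $G\notin\mcn$ that is $P$-reconstructible, and let $H$ be an arbitrary $Q$-reconstruction of $G$, i.e.\ a graph (without isolated vertices) with $\esp(H)\cong\esp(G)$; proving $H\cong G$ is exactly the assertion that $G$ is $Q$-reconstructible. By Theorem~\ref{thmmain}(2) the passage $\esp(\cdot)\mapsto\isp(\cdot)$ is realised by a fixed procedure $\Psi$ that reads only the abstract edge-subgraph poset and returns $\isp$ correctly for every graph not in $\mcn$. Since $\Psi$ depends only on the isomorphism class of its input and $\esp(H)\cong\esp(G)$, we immediately get $\Psi(\esp(H))=\Psi(\esp(G))=\isp(G)$.

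First I would reduce the whole statement to the single claim that $H\notin\mcn$. Granting it, Theorem~\ref{thmmain}(2) applied to $H$ gives $\isp(H)=\Psi(\esp(H))=\isp(G)$, and the $P$-reconstructibility of $G$ then forces $H\cong G$, proving the corollary. (En passant this shows that $\isp$ is itself a $Q$-reconstructible invariant on the complement of $\mcn$.) So the novel content of the corollary is essentially this composition; everything else is bookkeeping.

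The step that needs care — and the only real obstacle — is verifying that $H\notin\mcn$, i.e.\ that $\mcn$ is a union of $\esp$-equivalence classes, so that no graph outside $\mcn$ can share its abstract edge-subgraph poset with one inside. I would establish this family by family from the explicit description of $\mcn$, using that the height of $\esp(\cdot)$ equals the number of edges, so an $\esp$-partner of $G$ has exactly $e(G)$ edges. For $\mcf_0\cup\mcf_1\cup\mcf_2$ the graphs have at most $7$ edges, so their full $\esp$-classes are pinned down by direct inspection (and by $Q$-reconstructibility of graphs with at most $7$ edges, Lemma~\ref{lem-7edges}); these classes are the triple $\{3K_2,K_3,K_{1,3}\}$ and the pairs exhibited in Propositions~\ref{prop-onlyif} and~\ref{p-f1f3}, all contained in $\mcn$. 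For $\mcf_3$ one checks that the graphs whose edge-subgraph poset is a chain are exactly the stars $K_{1,m}$, the matchings $mK_2$, and $K_3$ (a graph in which every two edges meet is a star or a triangle, and one in which every two edges are disjoint is a matching), so an $\esp$-partner of $K_{1,m}$ or $mK_2$ is again of this form and hence lies in $\mcf_0\cup\mcf_3$. For $\mcf_4$, the components drawn from $\mcx$ are $Q$-reconstructible and pairwise distinguishable, so the only ambiguity in a graph $\esp$-equivalent to $rK_3+sK_{1,3}+F$ is the interchange of the multiplicities of $K_3$ and $K_{1,3}$ (which have the same abstract edge-subgraph poset), yielding $sK_3+rK_{1,3}+F\in\mcf_4$. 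Collecting these cases shows $\mcn$ is closed under $\esp$-equivalence, which forces $H\notin\mcn$ and completes the argument; much of this verification is already inherited from the classification underlying Theorem~\ref{thm-q-erc} and Lemma~\ref{lem-vk}.
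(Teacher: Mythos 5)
Your overall plan (compose the $\esp\to\isp$ construction with $P$-reconstructibility) is the right one, but the way you set it up manufactures an obstacle that the paper's framework avoids, and your patch for that obstacle contains a genuine error. In this paper, ``$\isp(G)$ can be constructed from $\esp(G)$'' is not a statement about a procedure $\Psi$ whose correctness is only guaranteed on inputs coming from graphs outside $\mcn$; reconstructibility is defined by quantifying over \emph{all} legitimate labellings (see the paragraph following Definition~\ref{def-cpl}): an invariant $f$ of $G$ is $Q$-reconstructible precisely when $f(H)=f(G)$ for \emph{every} $Q$-reconstruction $H$ of $G$. Read this way, Theorem~\ref{thmmain}(2) already asserts that for $G\notin\mcn$, every graph $H$ with $\esp(H)\cong\esp(G)$ satisfies $\isp(H)\cong\isp(G)$, whether or not $H$ lies in $\mcn$; and this is what the proof actually delivers, since in every case it either shows $G$ is $Q$-reconstructible outright, or shows $v(x),k(x)$ are identical under every legitimate labelling and then obtains $\cpl(H)\cong\cpl(G)$ and $\isp(H)\cong\isp(G)$ for every $Q$-reconstruction $H$ via Lemma~\ref{lem-omega} and Theorem~\ref{thm-cpl-p}. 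Hence the corollary is immediate: for any $Q$-reconstruction $H$ of $G$ we get $\isp(H)\cong\isp(G)$, and $P$-reconstructibility of $G$ forces $H\cong G$. No claim that $\mcn$ is closed under $\esp$-equivalence is needed, which is why the paper states the corollary without proof.

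The concrete error is in your treatment of $\mcf_4$, the step you yourself flag as the only real obstacle. You assert that ``the components drawn from $\mcx$ are $Q$-reconstructible and pairwise distinguishable, so the only ambiguity \ldots{} is the interchange of the multiplicities of $K_3$ and $K_{1,3}$.'' This is false as stated: $\mcx$ contains $P_3$, $P_4$ and $C_4$, which lie in $\mcf_3\cup\mcf_1\cup\mcf_2\subseteq\mcn$ and are therefore \emph{not} $Q$-reconstructible. Moreover, even $Q$-reconstructibility of the individual components would not determine the $\esp$-class of a disjoint union, because the edge-subgraph poset of a union is not assembled componentwise; indeed connected graphs can share $\esp$ with disconnected ones (e.g.\ $T_4$ with $P_4+K_2$, and $B_1$ with $C_4+K_2$), so a priori a partner of $rK_3+sK_{1,3}+F$ need not even be a disjoint union of the ``same'' pieces. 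Nothing in Theorem~\ref{thm-q-erc} or Lemma~\ref{lem-vk} as stated supplies the missing claim. (The closure claim is in fact true, but proving it for $\mcf_4$ takes real work: for a hypothetical partner $H\notin\mcn$ with more than $7$ edges one can check from the common poset that $\Delta(H)\leq 3$, that $H$ contains $K_{1,2}+2K_2$ and that it does not contain $T_4$, so Lemma~\ref{lem-K_1,2+2K_2-notT_4} makes $H$ $Q$-reconstructible, contradicting $\esp(H)\cong\esp(G)$ with $G\in\mcn$; partners with at most $7$ edges are excluded by Lemma~\ref{lem-7edges}.) As written, your argument has a gap exactly at this step.
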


\section*{Acknowledgements} This study was financed in part by the Coordenação
de Aperfeiçoamento de Pessoal de Nível Superior – Brasil (CAPES) – Finance Code
001. The first author would like to thank CAPES for the doctoral scholarship at
UFMG.

\bibliographystyle{amsplain}

\providecommand{\bysame}{\leavevmode\hbox to3em{\hrulefill}\thinspace}
\providecommand{\MR}{\relax\ifhmode\unskip\space\fi MR }
\providecommand{\MRhref}[2]{%
  \href{http://www.ams.org/mathscinet-getitem?mr=#1}{#2}
}
\providecommand{\href}[2]{#2}

\end{document}